\renewcommand{\bar}{\overline}
\newcommand{\set}[1]{\left\lbrace #1 \right\rbrace}
\newcommand{\setdef}[2]{\left\lbrace #1  ~ | ~  #2 \right\rbrace}
\newcommand{\all}{\ \forall\ }
\newcommand{\ex}{\ \exists\ }
\newcommand{\restr}{\mathord\restriction}
\def\N{\mathbb N}
\def\Z{\mathbb Z}
\def\R{\mathbb R}
\def\E{\mathbb{E}}
\def\P{\mathbb{P}}
\def\lb{\left(}
\def\rb{\right)}
\def\1{\mathds{1}}
\newcommand{\bP}{\bar{\P^n_x}}
\def\email#1{e-mail: #1}
\def\keywords#1{\par\medskip
\noindent\textbf{Keywords.} #1}
\def\amsclass#1{\par\medskip
	\noindent\textbf{AMS Subject Classification (2010):} #1}
\begin{document}
	\numberwithin{equation}{section}	
	%\numberwithin{figure}{section}	
	
	\theoremstyle{definition}
	\newtheorem{dfn}{Definition}[section]
	\newtheorem{rem}[dfn]{Remark}
	\newtheorem{example}[dfn]{Example}
	
	\theoremstyle{plain} 
	\newtheorem{lemma}[dfn]{Lemma}
	\newtheorem{prop}[dfn]{Proposition}
	\newtheorem{cor}[dfn]{Corollary}
	\newtheorem{thm}[dfn]{Theorem}
	
\title{On the Probabilistic Representation of the Free Effective Resistance of Infinite Graphs}
\author{%
	Tobias Weihrauch%
	\footnote{T. Weihrauch: Universit\"{a}t Leipzig, Fakult\"{a}t f\"{u}r Mathematik und Informatik, Augustusplatz 10, 04109 Leipzig, Germany; \email{weihrauch@math.uni-leipzig.de}
	\vspace{0.2cm}}
	\qquad %
	Stefan Bachmann%
	\footnote{S. Bachmann: Universit\"{a}t Leipzig, Fakult\"{a}t f\"{u}r Mathematik und Informatik, Augustusplatz 10, 04109 Leipzig, Germany; \email{bachmann@math.uni-leipzig.de}
	\vspace{0.2cm}}
	\\Universit\"{a}t Leipzig}
\maketitle

\begin{abstract}
	We completely characterize when the free effective resistance of an infinite graph can be expressed in terms of simple hitting probabilities of the graphs random walk.
	\keywords{Weighted graph, Electrical networks, Effective resistance, Random walk, Transience}
	\amsclass{Primary 05C63 %Infinite graphs
		$\cdot$ 05C81; %Random walks on graphs
		Secondary 60J10 %Markov chains (discrete-time Markov processes on discrete state spaces)
		$\cdot$ 05C12 %Distance in graphs
	}
\end{abstract}

\section{Introduction}
We consider undirected, connected graphs with no multiple edges and no self-loops. Each edge $(x,y)$ is given a positive weight $c(x,y)$. A possible interpretation is that $(x,y)$ is a resistor with resistance $1/c(x,y)$. The graph then becomes an electrical network.

More precisely, a \emph{graph} $G = (V,c)$ consists of an at most countable \emph{set of vertices} $V$ and a \emph{weight function} $c : V \times V \to \R_{\geq 0}$ such that $c$ is symmetric and for all $x \in V$, we have $c(x,x) = 0$ and
\begin{equation*}
c_x := \sum_{y \in V} c(x,y) < \infty.
\end{equation*}
We think of two vertices $x,y$ as being \emph{adjacent} if $c(x,y) > 0$.

For $x \in V$, let $\P_x$ be the random walk on $G$ starting in $x$. It is the Markov chain defined by the transition matrix
\begin{equation*}
p(x,y) = \frac{c(x,y)}{c_x}~, x,y \in V
\end{equation*}
and initial distribution $\delta_x$. We will think of $\P_x$ as a probability measure on $\Omega = V^{\N_0}$ equipped with the $\sigma$-algebra $(2^V)^{\otimes \N_0}$. If not explicitly stated otherwise, we will from now on assume that every occurring graph is connected. In that case, $\P_x$ is irreducible.

For a set of vertices $A \subseteq V$, let
\begin{align*}
\tau_A &:= \inf\setdef{k \geq 0}{\omega_k \in A} \text{ and}\\
\tau_A^+ &:= \inf\setdef{k \geq 1}{\omega_k \in A}
\end{align*}
be \emph{hitting times of $A$}. For $x \in V$, we use the shorthand notation $\tau_{\set{x}} =: \tau_x$.

%The \emph{ball of radius $n$ around $x$} is denoted by $B_n(x)$ and defined inductively. Let $B_1(x) = N(x) \cup \set{x}$ and for $n \geq 2$, let
%\begin{equation}
%B_n(x) := \bigcup_{y \in B_{n-1}(x)} N(y)
%\end{equation}

Suppose that $G$ is finite. Ohm's Law states that the \emph{effective resistance} $R(x,y)$ between to vertices $x,y$ is the voltage drop needed to induce an electrical current of exactly 1 ampere from $x$ to $y$. 

The relationship between electrical currents and the random walk of $G$ has been studied intensively \cite{DoyleSnell1984, JorgensenPearse2009, LyonsPeres2015,Tetali1991}. For finite graphs, $x \neq y$, one has the following probabilistic representations
\begin{align}
\label{eq:EffResProbReprFiniteExpectation}
R(x,y)& = \frac{1}{c_x} \E_x\left[ \sum_{k=0}^{\tau_y-1} \mathds{1}_x \right]\\
\label{eq:EffResProbReprFiniteLessOrEqual}
& = \frac{1}{c_x \cdot \P_x[\tau_y \leq \tau_x^+]}\\
\label{eq:EffResProbReprFiniteStrictlyLess}
& = \frac{1}{c_x \cdot \P_x[\tau_y < \tau_x^+]}~.
\end{align}
Note that $(c_z)_{z \in V}$ is an invariant measure of $p$. A proof of the first equality in the unweighted case can be found in \cite{Tetali1991} and can be extended to fit our more general context. 
To see that (\ref{eq:EffResProbReprFiniteExpectation}) equals (\ref{eq:EffResProbReprFiniteLessOrEqual}), realize that $\sum_{k=0}^{\tau_y-1} \mathds{1}_x$ is geometrically distributed with parameter $\P_x[\tau_x^+ < \tau_y]$. For the last equality, use that any finite graph is recurrent and thus $\P_x[\tau_x^+ =\tau_y = \infty] = 0$.

The subject of effective resistances gets much more complicated on infinite graphs since those may admit multiple different notions of effective resistances. Recurrent graphs, however, have a property which is often referred to as \emph{unique currents} \cite{LyonsPeres2015} and consequently also have one unique effective resistance. In this case, the above representation holds \cite{Barlow2017,Weihrauch2018}. Indeed, \cite[Theorem 2.61]{Barlow2017} states the more general inequalities
\begin{equation}
\label{eq:BarlowBounds}
\frac{1}{c_x \cdot \P_x[\tau_y \leq \tau_x^+]} \leq R^F(x,y) \leq \frac{1}{c_x \cdot \P_x[\tau_x < \tau_x^+]}
\end{equation}
for the free effective resistance $R^F$ (see Section \ref{section:FreeEffRes}) of any infinite graph. 

In \cite[Corollary 3.13 and 3.15]{JorgensenPearse2009}, it is suggested that one has 
\begin{equation}
\label{eq:FreeEffResProbRepr}
R^F(x,y) = \frac{1}{c_x \cdot \P_x[\tau_y < \tau_x^+]}
\end{equation}
on all transient networks. However, this is false as our example in Section \ref{section:InfiniteT} shows.

The main result of this work (Corollary \ref{cor:MainResult}) states that the free effective resistance of a transient graph $G = (V,c)$ admits the representation (\ref{eq:FreeEffResProbRepr}) for all $x,y \in V$ if and only if $G$ is a subgraph of an infinite line. Corollary \ref{cor:LowerBoundCharacterization} states that the lower bound in (\ref{eq:BarlowBounds}) are attained if and only if $G$ is recurrent. %TODO in particular, we improve barlows statement

\section{Free effective resistance}
\label{section:FreeEffRes}
Let $G = (V,c)$ be an infinite connected graph. For any $W \subseteq V$, let $G \restr_W := (W, c \restr_W)$ be the \emph{subgraph of $G$ induced by $W$}.
We say a sequence $(V_n)_{n \in\N}$ of subsets of $V$ is a \emph{finite exhaustion of $V$} if $|V_n|< \infty$, $V_n \subseteq V_{n+1}$ and $V = \cup_{n \in\N} V_n$. Define $G_n = (V_n, c_n):= G\restr_{V_n}$.

\begin{dfn}
	Let $(V_n)_{n \in \N}$ be any finite exhaustion of $V$ such that $G_n$ is connected. For $x,y \in V$, the \emph{free effective resistance} $R^F(x,y)$ of $G$ is defined by
	\begin{equation*}
	R^F(x,y) = \lim_{n \to \infty} R_{G_n}(x,y).
	\end{equation*}
\end{dfn}
\begin{rem}
	The fact that $R_{G_n}(x,y)$ converges is due to Rayleigh's monotonicity principle (see e.g. \cite{BLPS2001, Grimmett2010}).
\end{rem}

We denote by $\P^n_x$ the random walk on $G_n$ starting in $x$ with transition matrix $p_n$. Since we can extend it to a function on $V$ by defining $p_n(x,y) = 0$ whenever $x \notin V_n$ or $y \notin V_n$, $\P^n_x$ is a probability measure on $\Omega = V^{\N_0}$ and we have
\begin{equation*}
p_n(x,y) = \frac{c_n(x,y)}{(c_n)_x} = \frac{c(x,y)}{\sum_{w \in V_n} c(x,w)}
\end{equation*}
for all $x,y \in V_n$.

\begin{rem}
	Note that 
	\begin{equation*}
		p_n(x,y) = p(x,y) \cdot \frac{c_x}{(c_n)_x} = p(x,y) \cdot \lb 1 + \frac{\sum_{v \notin V_n} c(x,v)}{\sum_{v \in V_n} c(x,v)}\rb \geq p(x,y).
	\end{equation*}
\end{rem}

Since
\[
R_{G_n}(x,y) = \frac{1}{(c_n)_x \P^n_x[\tau_y < \tau_x^+]}
\]
for all $n \in \N$ and $c_x = \lim_{n \to \infty} (c_n)_x$, (\ref{eq:FreeEffResProbRepr}) holds if and only if
\begin{equation}
\label{eq:ConvergenceOfProbabilites}
\lim_{n \to \infty} \P^n_x[\tau_y < \tau_x^+] = \P_x[\tau_y < \tau_x^+].
\end{equation}
Analogously, the lower bound of (\ref{eq:BarlowBounds}) is attained if and only if
\begin{equation}
\label{eq:ConvergenceOfProbabilitesLowerBound}
\lim_{n \to \infty} \P^n_x[\tau_y < \tau_x^+] = \P_x[\tau_y \leq \tau_x^+].
\end{equation}

\section{The transient $\mathcal{T}$}
\label{section:InfiniteT}
We will now show that (\ref{eq:FreeEffResProbRepr}) does not hold in general. Consider the graph $\mathcal{T}$ shown in Figure \ref{fig:TGraph}. It is transient and we have $R^F(B,T) = 2$. However, 
\begin{align*}
\P_B[\tau_T < \tau_B^+] & = \P_0[\tau_T < \tau_B]\\
& = 1 - \P_0[\tau_B \leq \tau_T]\\
& = 1 - \P_0[\tau_B < \tau_T] - \P_0[\tau_B = \tau_T = \infty].
\end{align*}
Due to the symmetry of $\mathcal{T}$ we have $ \P_0[\tau_B < \tau_T] =  \P_0[\tau_T < \tau_B]$. Together with the transience of $\mathcal{T}$, this implies
\[
\P_B[\tau_T < \tau_B^+] = \P_0[\tau_T < \tau_B] = \frac{1 -  \P_0[\tau_B = \tau_T= \infty]}{2} < \frac{1}{2}
\]
and
\[
\P_B[\tau_T \leq \tau_B^+] = \P_0[\tau_T \leq \tau_B] = \frac{1 +  \P_0[\tau_B = \tau_T= \infty]}{2} > \frac{1}{2}.
\]
More precisely, one can compute 
$$\P_B[\tau_T < \tau_B^+] = \frac{2}{5} \text{ and } \P_B[\tau_T \leq \tau_B^+] = \frac{3}{5}~.$$
Hence,
\[
\frac{1}{c_B \P_B[\tau_T < \tau_B^+]} \neq R^F(B,T)
\]
and
$$\frac{1}{c_B}\E_B\left[\sum_{k=0}^{\tau_T - 1} \mathds{1}_B(\omega_k) \right] = \frac{1}{c_B\P_B[\tau_T \leq \tau_B^+]} \neq R^F(B,T).$$
\begin{figure}
	\centering
	\begin{tikzpicture}[-,auto, node distance = 1.2cm, every loop/.style={}, vertex/.style={draw, circle, fill=black!10, inner sep=0mm, minimum size=6mm}]
	\node[vertex] (B1) {$B$};
	\node[vertex, above of= B1] (A) {$0$};
	\node[vertex, above of= A] (T) {$T$};
	\node[vertex, right of =A] (B) {$1$};
	\node[vertex, right of=B] (C) {$2$};
	\node[vertex, right of=C] (D) {$3$};
	\node[vertex, right of=D] (E) {$4$};
	\node[right of=E] (F) {$\ldots$};
	
	\path 	(B1) edge node {$1$} (A)
	(T) edge node[left] {$1$} (A)
	(A) edge node {$1$} (B)
	(B) edge node {$2$} (C)
	(C) edge node {$4$} (D)
	(D) edge node {$8$} (E)
	(E) edge node {$16$} (F);
	\end{tikzpicture}
	\caption{\label{fig:TGraph}The transient graph $\mathcal{T}$}
\end{figure}
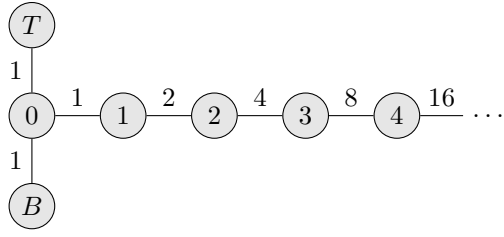
\begin{rem}
	Note that, although $\mathcal{T}$ is transient, it has unique currents since every harmonic function is constant. This shows that whether (\ref{eq:FreeEffResProbRepr}) holds is more tightly connected to the transience of the graph than to the uniqueness of currents (which is equivalent to the existence of harmonic Dirichlet functions \cite{LyonsPeres2015}).
\end{rem}

\section{Probability of paths}

To check whether (\ref{eq:ConvergenceOfProbabilites}) holds, it is useful to write both sides as sums of probabilities of paths.

A sequence $\gamma = (\gamma_0, \ldots, \gamma_n) \in V^{n+1}$ is called a \emph{path (of length n)} in $G$ if $c(\gamma_k, \gamma_{k+1}) > 0$ for all $k = 0, \ldots, n-1$. We denote by $L(\gamma)$ the length of $\gamma$ and by $\Gamma_G$ the set of all paths in $G$. A path $\gamma$ is called \emph{simple} if it does not contain any vertex twice. The probability of $\gamma$ with respect to $\P_x$ is defined by
\begin{equation*}
\P_x(\gamma) := \P_x(\set{\gamma} \times V^{\N}) = \mathds{1}_x(\gamma_0) \cdot \prod_{k=0}^{L(\gamma)-1} p(\gamma_k, \gamma_{k+1})~.
\end{equation*}

We say $\gamma$ is $x \to y$ if $\gamma_0 = x, \gamma_{L(\gamma)} = y$ and $\gamma_k \notin \{x,y\}$ for all $k = 1,\ldots, L(\gamma) - 1$. We denote by $\Gamma_G(x,y)$ the set of all paths $x \to y$ in $G$.

For $A \subseteq V$, let
\[
\Gamma_G(x,y;A) := \setdef{\gamma \in \Gamma_G(x,y)}{\gamma_k \in A \text{ for all } k = 0, \ldots, L(\gamma)}
\]
be the set of all paths $x \to y$ in $G$ which only use vertices in $A$. 

Using this notion and $\Gamma_{G_n}(x,y) = \Gamma_G(x,y;V_n)$,  (\ref{eq:ConvergenceOfProbabilites}) becomes
\begin{equation}
\label{eq:ConvergenceSumsOfPaths}
\lim_{n \to \infty} \sum_{\gamma \in \Gamma_G(x,y;V_n)} \P^n_x(\gamma) = \sum_{\gamma \in \Gamma_G(x,y)} \P_x(\gamma)
\end{equation}
Since $\Gamma_G(x,y;V_n)$ increases to $\Gamma_G(x,y)$, this looks like an easy application of either the Monotone Convergence Theorem or the Dominated Convergence Theorem.  However, both are not applicable since $\P^n_x(\gamma)$ may be strictly greater than $\P_x(\gamma)$.

To investigate when exactly (\ref{eq:ConvergenceSumsOfPaths}) holds, we will introduce another random walk on $V$ which can be considered an intermediary between $\P^n_x$ and $\P_x$.

\section{Extended finite random walk}

The only difference in the behavior of $\P_x$ and $\P^n_x$ occurs when $\P_x$ leaves $V_n$. Instead, $\P^n_x$ is basically reflected back to a vertex in $V_n$. We will now construct an intermediary random walk which still has a finite state space, models the behavior of stepping out of $V_n$ and has the same transition probabilities as $\P_x$ in $V_n$. This is done by adding \emph{boundary vertices} to $G_n$ wherever there is an edge from $V_n$ to $V \setminus V_n$. 

For any set $A \subseteq V$, let
\[
\partial_i A := \setdef{v \in A}{\ex w \in V \setminus A : c(v,w) > 0}
\]
be the \emph{inner boundary} and $\partial_o A := \partial_i (V \setminus A)$ be the \emph{outer boundary} of $A$ in $G$.

For any $v \in \partial_i A$, let $\bar{v}$ be a copy of $v$. Define $\bar{G_n} = (\bar{V_n}, \bar{c_n})$ where
\[
\bar{V_n} = V_n \cup \setdef{\bar{v}}{v \in \partial_i V_n},
\]
and $\bar{c_n}$ is defined as follows. For $x,y \in \bar{V_n}$, let 
\begin{equation*}
\bar{c_n}(x,y) = \bar{c_n}(y,x) = 
\begin{cases}
c(x,y) &, ~ x,y \in V_n\\
\sum_{z \notin V_n} c(x,z) & , ~ y = \bar{x}\\
0 & , \text{ otherwise}
\end{cases}.
\end{equation*}
In particular, we have $(\bar{c_n})_x = c_x$ for all $x \in V$. We denote by $\bar{\P^n_x}$ the random walk on $\bar{G_n}$ starting in $x$ with transition matrix $\bar{p_n}$ given by
\[
\bar{p_n}(x,y) = \frac{\bar{c_n}(x,y)}{(\bar{c_n})_x}~.
\]
Furthermore, let $V_n^* := \bar{V_n} \setminus V_n$.

\begin{example}
	Let $G$ be the lattice $\Z^2$ with unit weights, see Figure \ref{fig:ExampleZ2}. Furthermore, let $V_n := \set{-n\ldots,0,\ldots,n}^2$.
	\begin{figure}
		\centering
		\begin{tikzpicture}[
		vertex/.style={draw, circle, fill=black, minimum size=1.5mm, inner sep=0, align=center, scale=0.75},
		faded/.style={draw=black!50, dashed}
		]
		%main Z^2 grid -2...2
		\foreach \x in {-2,...,2}
		\foreach \y in {-2,...,2}
		\node [vertex] (\x\y) at (\x, \y) {}; 
		
		%grid lines
		\foreach \x in {-2,...,2}
		\foreach \y[count=\yi] in {-2,...,1}
		\pgfmathtruncatemacro{\i}{\yi-2} \draw (\x\y) -- (\x\i) (\y\x) -- (\i\x);
		
		%outside stuff
		\foreach \t in {-2,...,2}
		{
			\node (t\t) at (\t, 3) {};
			\draw [faded] (t\t) -- (\t 2);
			\node (b\t) at (\t, -3) {};
			\draw [faded] (b\t) -- (\t -2);
			\node (l\t) at (-3, \t) {};
			\draw [faded] (l\t) -- (-2\t);
			\node (r\t) at (3, \t) {};
			\draw [faded] (r\t) -- (2\t);
		}
		\end{tikzpicture}
		\caption{\label{fig:ExampleZ2} The lattice $\Z^2$.}
	\end{figure}
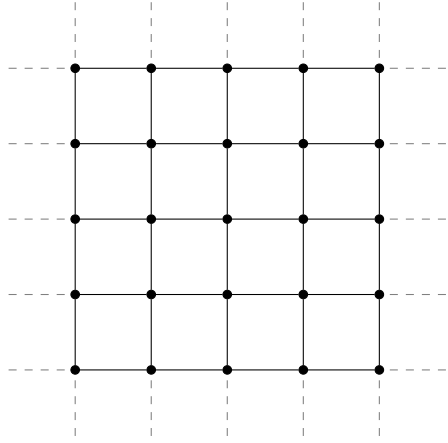
$G_1$ and $\bar{G_1}$ can be seen in Figure \ref{fig:ExampleExtension}. Note that $c((1,1), \bar{(1,1)}) = 2$ since $(1,1)$ has two edges leaving $V_1$ in $G$.
	\begin{figure}[h]
			\centering
			\begin{tikzpicture}[
			vertex/.style={draw, circle, fill=black, minimum size=1.5mm, inner sep=0, align=center, scale=0.75},
			faded/.style={draw=black!50, dashed}
			]
			%main Z^2 grid -1...1
			\foreach \x in {-1,...,1}
			\foreach \y in {-1,...,1}
			\node [vertex] (\x\y) at (\x, \y) {}; 
			
			%grid lines
			\foreach \x in {-1,...,1}
			\foreach \y[count=\yi] in {-1,0}
			\pgfmathtruncatemacro{\i}{\yi-1} \draw (\x\y) -- (\x\i) (\y\x) -- (\i\x);
			
			\draw [->] (2.5,0) -- (3.5,0);
			
			% RIGHT SIDE
			\def\offsetX{7}
			%main Z^2 grid -1...1
			\foreach \x in {-1,...,1}
			\foreach \y in {-1,...,1}
			\node [vertex] (R\x\y) at (\offsetX+\x, \y) {}; 
			
			%grid lines
			\foreach \x in {-1,...,1}
			\foreach \y[count=\yi] in {-1,0}
			\pgfmathtruncatemacro{\i}{\yi-1} \draw (R\x\y) -- (R\x\i) (R\y\x) -- (R\i\x);

			%boundary vertices (center)
			\foreach \t in {-1,1}
			{
				\node [vertex] (vo\t) at (\offsetX, \t*2) {};
				\draw (vo\t) -- (R0\t);
				
				\node [vertex] (ho\t) at (2*\t+\offsetX, 0) {};
				\draw (ho\t) -- (R\t 0);
			}
		
			%boundary vertices (corners)
			\foreach \t in {-1,1}
			{
				\node [vertex] (o1\t\t) at (\offsetX+\t*2, \t*2) {};
				\draw (o1\t\t) -- (R\t\t) node [midway,above,yshift=1mm] {2};
				
				\pgfmathtruncatemacro{\s}{-\t}
				\node [vertex] (o\t\s) at (\offsetX+\t*2, \s*2) {};
				\draw (o\t\s) -- (R\t\s) node [midway,above,yshift=1mm] {2};
			}
			\end{tikzpicture}
			\caption{\label{fig:ExampleExtension} $G_1$ (left) and $\bar{G_1}$ (right) for $G = \Z^2$.}
	\end{figure}
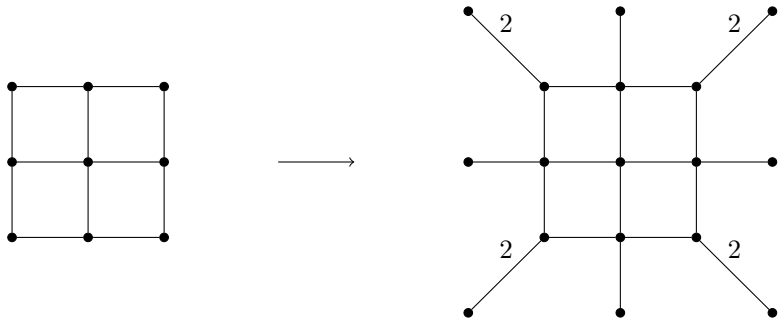
\end{example}

\begin{lemma}[Relation of $p_n, \bar{p_n}$ and $p$]
	\label{lemma:RelationOfRandomWalks}
	For $x,y \in V_n$ we have
	\begin{equation*}
	p_n(x,y) \geq p(x,y) = \bar{p_n}(x,y).
	\end{equation*}
	For $x,y \in V$ and $m \in \N$ such that $x,y \in V_m$, we have
	\begin{equation*}
	\lim_{n \to \infty} p_n(x,y) = p(x,y) = \bar{p_m}(x,y).
	\end{equation*}
\end{lemma}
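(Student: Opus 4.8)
The plan is to reduce everything to the definitions of the three transition matrices and to the elementary fact that $(c_n)_x = \sum_{w \in V_n} c(x,w)$ increases to $c_x$ along the exhaustion.

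First I would establish the identity $p(x,y) = \bar{p_n}(x,y)$ for $x,y \in V_n$. By construction $\bar{c_n}(x,y) = c(x,y)$ whenever $x,y \in V_n$, and it was already observed that $(\bar{c_n})_x = c_x$ for all $x \in V$; dividing gives $\bar{p_n}(x,y) = \bar{c_n}(x,y)/(\bar{c_n})_x = c(x,y)/c_x = p(x,y)$. Next, for the inequality $p_n(x,y) \geq p(x,y)$, I would simply note $p_n(x,y) = c(x,y)/\sum_{w \in V_n} c(x,w)$ and $\sum_{w \in V_n} c(x,w) \leq \sum_{w \in V} c(x,w) = c_x < \infty$, so the denominator can only shrink compared to that of $p$; this is exactly the content of the Remark following the definition of $\bar G_n$, so I can either cite it or reproduce the one-line estimate. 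This settles the first displayed line of the lemma.

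For the second line, fix $x,y \in V$ and $m$ with $x,y \in V_m$. For every $n \geq m$ we have $x,y \in V_n$, so $p_n(x,y) = c(x,y)/(c_n)_x$ is defined and, by the first part applied with $m$ in place of $n$, $\bar{p_m}(x,y) = p(x,y)$. It then remains to show $\lim_{n\to\infty} p_n(x,y) = p(x,y)$, i.e. $\lim_{n\to\infty}(c_n)_x = c_x$. Since $(V_n)_{n}$ is increasing with $\bigcup_n V_n = V$, the partial sums $\sum_{w\in V_n} c(x,w)$ increase to $\sum_{w\in V} c(x,w) = c_x$ (monotone convergence for sums of nonnegative terms); as $c_x \in (0,\infty)$, continuity of $t \mapsto c(x,y)/t$ away from $0$ gives $p_n(x,y) \to c(x,y)/c_x = p(x,y)$.

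There is essentially no hard step here: the only point requiring a word of justification is the convergence $(c_n)_x \to c_x$, which is immediate from the summability assumption $c_x < \infty$ in the definition of a graph together with the defining properties of a finite exhaustion. I would present the whole argument as a short direct computation.
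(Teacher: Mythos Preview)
Your argument is correct and mirrors exactly what the paper intends: the lemma is stated in the paper without proof, since all three ingredients you invoke---the remark $p_n(x,y)\geq p(x,y)$, the identity $(\bar{c_n})_x=c_x$, and the convergence $(c_n)_x\to c_x$---have already been recorded in the text. Your write-up simply assembles these observations into an explicit proof, which is the only reasonable approach here.
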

Note that for $n \in \N$, we have
\[
\Gamma_{G_n}(x,y) = \Gamma_{\bar{G_n}}(x,y;V_n) = \Gamma_G(x,y;V_n).
\]
By Lemma \ref{lemma:RelationOfRandomWalks}, the following holds for all $x,y \in V_n$.
\begin{equation*}
\all m \geq n \all \gamma \in \Gamma_G(x,y;V_m): \P_x(\gamma) = \bar{\P^m_x}(\gamma)
\end{equation*}
The connection between $\P_x^n(\gamma)$ and $\bP(\gamma)$ is a bit more intricate.
\begin{dfn}
	For $x,y \in V_n$, let $\pi : \Gamma_{\bar{G_n}}(x,y) \to \Gamma_{G_n}(x,y)$ be the projection of $\Gamma_{\bar{G_n}}(x,y)$ onto $\Gamma_{G_n}(x,y)$ which removes all steps of the form $(\bar{v},v)$. 
	
	More precisely, for $\gamma \in \Gamma_{G_n}(x,y)$, let $\pi(\gamma) = \gamma$. For general $\gamma \in \Gamma_{\bar{G_n}}(x,y)$, define $\pi(\gamma)$ inductively over $L(\gamma)$. If $L(\gamma) \leq 2$, then $\gamma \in \Gamma_{G_n}(x,y)$ and thus $\pi(\gamma) =\gamma$. For $n := L(\gamma) > 2$, let $\gamma = (x, \gamma_1, \ldots, \gamma_{n-1}, y)$ and $\gamma' = (x, \gamma_1, \ldots, \gamma_{n-3})$. Now, $(\gamma_{n-2}, \gamma_{n-1}, y)$ can either be of the form $(v,w,y)$ with $v,w \in V_n$ or $(\bar{v},v,y)$ with $v \in V_n$. In the former case, let
	\[
	\pi(\gamma) := (\pi(\gamma'), \gamma_{n-2}, \gamma_{n-1}, y),
	\]
	in the latter
	\[
	\pi(\gamma) := (\pi(\gamma'),y).
	\]
\end{dfn}
\begin{lemma}
	\label{lemma:PathProbabilityFiniteAndExtendedRandomWalk}
	For all $x \neq y$ and $\gamma \in \Gamma_{G_n}(x,y)$, we have
	\begin{equation*}
	\P_x^n(\gamma) = \frac{c_x}{(c_n)_x} \cdot \sum_{\bar{\gamma} \in \pi^{-1}(\gamma)} \bar{\P_x^n}(\bar{\gamma}).
	\end{equation*}
\end{lemma}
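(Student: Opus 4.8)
The plan is to give an explicit bijective description of the fibre $\pi^{-1}(\gamma)$, evaluate $\bar{\P_x^n}$ termwise on its elements, and collapse the result into a product of geometric series.

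Fix $\gamma = (\gamma_0, \dots, \gamma_L) \in \Gamma_{G_n}(x,y)$ with $L = L(\gamma) \ge 1$, and abbreviate $q_k := \bar{p_n}(\gamma_k,\bar{\gamma_k}) = \big(\sum_{z \notin V_n} c(\gamma_k,z)\big)/c_{\gamma_k}$ when $\gamma_k \in \partial_i V_n$, and $q_k := 0$ otherwise (recall $(\bar{c_n})_{\gamma_k} = c_{\gamma_k}$). Straight from the definitions of $c_n$ and $\bar{c_n}$ one gets $(c_n)_{\gamma_k} = c_{\gamma_k}(1 - q_k)$, and $0 \le q_k < 1$ for $k \le L-1$ because $\gamma_{k+1} \in V_n$ is a neighbour of $\gamma_k$; hence $p_n(\gamma_k,\gamma_{k+1}) = p(\gamma_k,\gamma_{k+1})/(1-q_k)$ and therefore $\P_x^n(\gamma) = \prod_{k=0}^{L-1} p(\gamma_k,\gamma_{k+1}) \cdot \prod_{k=0}^{L-1}(1-q_k)^{-1}$.

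Next I would argue that, since the only edge at a boundary vertex $\bar v \in V_n^*$ leads back to $v$, any visit of $\bar v$ by a path in $\bar{G_n}$ is an excursion $v, \bar v, v$, and $\pi$ simply erases the block $\bar v, v$ of each such excursion. Consequently the map $\bar\gamma \mapsto (m_1, \dots, m_{L-1})$, where $m_k$ is the number of excursions $\bar{\gamma_k},\gamma_k$ inserted immediately after the $k$-th vertex, is a bijection from $\pi^{-1}(\gamma)$ onto the set of tuples in $\N_0^{L-1}$ with $m_k = 0$ forced whenever $\gamma_k \notin \partial_i V_n$. The step that needs care — and the only real subtlety — is why insertions at the positions $k = 0$ and $k = L$ are excluded: either would create a second, interior occurrence of $x$, resp.\ of $y$, contradicting that $\bar\gamma$ is an $x \to y$ path in $\bar{G_n}$; this is exactly where the hypothesis $x \neq y$ is used, and it is the absence of an insertion at $k = 0$ that will leave behind the correction factor.

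Finally, for the preimage $\bar\gamma$ indexed by $(m_1,\dots,m_{L-1})$, each real step contributes $\bar{p_n}(\gamma_k,\gamma_{k+1}) = p(\gamma_k,\gamma_{k+1})$, each step into a copy contributes $q_k$, and each return step $\bar{\gamma_k}\to\gamma_k$ contributes $1$ (as $\bar{\gamma_k}$ has $\gamma_k$ as its only neighbour), so $\bar{\P_x^n}(\bar\gamma) = \prod_{k=0}^{L-1} p(\gamma_k,\gamma_{k+1}) \cdot \prod_{k=1}^{L-1} q_k^{m_k}$. Summing over all admissible tuples turns each index $1 \le k \le L-1$ into a convergent geometric series $\sum_{m\ge 0} q_k^m = (1-q_k)^{-1}$, giving $\sum_{\bar\gamma \in \pi^{-1}(\gamma)} \bar{\P_x^n}(\bar\gamma) = \prod_{k=0}^{L-1} p(\gamma_k,\gamma_{k+1}) \cdot \prod_{k=1}^{L-1}(1-q_k)^{-1}$. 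This differs from the expression for $\P_x^n(\gamma)$ above precisely by the missing factor $(1-q_0)^{-1} = c_{\gamma_0}/(c_n)_{\gamma_0} = c_x/(c_n)_x$, which is the assertion. I expect the bijective description of $\pi^{-1}(\gamma)$, including the endpoint subtlety, to be the crux; the rest is bookkeeping and summing a geometric series.
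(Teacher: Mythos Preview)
Your proof is correct and rests on the same idea as the paper's: describe $\pi^{-1}(\gamma)$ as all ways of inserting excursions $(\bar{\gamma_k},\gamma_k)$ at interior positions, then collapse the sum via geometric series. The only difference is presentational: the paper builds the fibre description inductively on $L(\gamma)$ (peeling off the last step and inserting excursions at position $L-1$), whereas you give the global parametrization by tuples $(m_1,\dots,m_{L-1})$ directly; your version makes the endpoint restriction (no insertions at $k=0$ or $k=L$, hence the leftover factor $c_x/(c_n)_x$) more transparent, while the paper absorbs it into the base case $L(\gamma)=1$.
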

\begin{proof}
	We prove the claim via induction over $L(\gamma)$.
	For $L(\gamma) = 1$, we have $\gamma = (x,y)$. By definition of $\pi$, any preimage $\gamma'\in \pi^{-1}(\gamma)$ is in $\Gamma_{\bar{G_n}}(x,y)$ and thus visits $x$ and $y$ exactly once. Hence, $\pi^{-1}(\gamma) = \set{\gamma}$ and
	\[
	\frac{c_x}{(c_n)_x} \cdot \sum_{\bar{\gamma} \in \pi^{-1}(\gamma)} \bar{\P_x^n}(\bar{\gamma}) = \frac{c_x}{(c_n)_x} \cdot\bar{\P_x^n}(\gamma) = \frac{c_x}{(c_n)_x} \cdot \frac{c(x,y)}{c_x} = p_n(x,y) = \P_x^n(\gamma).
	\]
	Suppose that $n := L(\gamma) > 1$ and let $\gamma' = (\gamma_0, \ldots, \gamma_{n-1})$ and $v = \gamma_{n-1}$ Then, 
	\[
	\P_x^n(\gamma) = \P_x^n(\gamma') \cdot p_n(v, y)
	\]
	and
	\[
	\pi^{-1}(\gamma) = \setdef{(\bar{\gamma}',\underbrace{(\bar{v}, v), \ldots,(\bar{v}, v)}_{k~\text{times}},y)}{\bar{\gamma}' \in \pi^{-1}(\gamma'), k \in \N_0}.
	\]
	Since
	\[
	\sum_{k=0}^{\infty} (\bar{p_n}(v,\bar{v})\cdot \underbrace{\bar{p_n}(\bar{v},v)}_{=1})^k = \frac{1}{1 - \bar{p_n}(v, \bar{v})} = \frac{c_v}{c_v - \sum_{w \notin V_n} c(v,w)} = \frac{c_v}{(c_n)_v}~,
	\]
	we have 
	\[
	p_n(v,y) = \frac{c_v}{(c_n)_v} \cdot \bar{p_n}(v,y) = \left( \sum_{k=0}^{\infty} (\bar{p_n}(v,\bar{v})\cdot \underbrace{\bar{p_n}(\bar{v},v)}_{=1})^k \right) \cdot \bar{p_n}(v,y).
	\]
	Hence,
	\begin{align*}
	\P_x^n(\gamma) & = \P_x^n(\gamma') \cdot p_n(v, y) \\
	& = \frac{c_x}{(c_n)_x}\left(\sum_{\bar{\gamma}' \in \pi^{-1}(\gamma')} \bP(\bar{\gamma}') \right) \cdot \left( \sum_{k=0}^{\infty} (\bar{p_n}(v,\bar{v})\cdot \bar{p_n}(\bar{v},v))^k \right) \cdot \bar{p_n}(v,y)\\
	& = \frac{c_x}{(c_n)_x} \sum_{\bar{\gamma}' \in \pi^{-1}(\gamma')} \sum_{k=0}^{\infty} \left[ \bP(\bar{\gamma}') \cdot (\bar{p_n}(v,\bar{v})\cdot \bar{p_n}(\bar{v},v))^k \cdot \bar{p_n}(v,y)\right]\\
	& = \frac{c_x}{(c_n)_x}\sum_{\bar{\gamma}' \in \pi^{-1}(\gamma')} \sum_{k=0}^{\infty} \bP((\bar{\gamma}',\underbrace{(\bar{v}, v),\ldots,(\bar{v}, v)}_{k~\text{times}},y))\\
	& = \frac{c_x}{(c_n)_x}\sum_{\bar{\gamma} \in \pi^{-1}(\gamma)} \bar{\P_x^n}(\bar{\gamma}).
	\end{align*}
\end{proof}

\begin{prop}
	For $x,y \in V_n$, we have
	\begin{equation*}
	\bP[\tau_y < \tau_x^+] = \frac{(c_n)_x}{c_x} \cdot \P^n_x[\tau_y < \tau_x^+].
	\end{equation*}
\end{prop}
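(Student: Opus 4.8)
The plan is to expand both sides as sums of path probabilities and then feed the expansions into Lemma \ref{lemma:PathProbabilityFiniteAndExtendedRandomWalk}. Throughout we assume $x \neq y$, as in that lemma (if $x=y$ the event has probability $1$ for both walks, while $(c_n)_x/c_x$ need not be $1$, so the identity is to be read under that hypothesis).

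First I would record the elementary fact that, for a Markov chain started at $x$, the event $\{\tau_y < \tau_x^+\}$ is the disjoint union over all $\gamma \in \Gamma_G(x,y)$ of the cylinder sets $\set{\gamma}\times V^{\N}$: on $\{\tau_y < \tau_x^+\}$ the initial segment $(\omega_0,\ldots,\omega_{\tau_y})$ is a path from $x$ to $y$ whose interior vertices avoid $x$ (since $\tau_x^+ > \tau_y$) and $y$ (since $\tau_y$ is the first visit to $y$), hence lies in $\Gamma_G(x,y)$; conversely following any $\gamma \in \Gamma_G(x,y)$ forces $\tau_y < \tau_x^+$, and distinct paths give disjoint cylinders. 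Applying this to the random walks on $G_n$ and on $\bar{G_n}$ yields
\begin{equation*}
\P^n_x[\tau_y < \tau_x^+] = \sum_{\gamma \in \Gamma_{G_n}(x,y)} \P^n_x(\gamma)
\qquad\text{and}\qquad
\bP[\tau_y < \tau_x^+] = \sum_{\bar\gamma \in \Gamma_{\bar{G_n}}(x,y)} \bP(\bar\gamma).
\end{equation*}

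Next I would note that $\pi : \Gamma_{\bar{G_n}}(x,y) \to \Gamma_{G_n}(x,y)$ is a well-defined surjection: deleting detour steps $(\bar v, v)$ only removes vertices, so it creates no new interior occurrence of $x$ or $y$, and $\pi$ restricts to the identity on $\Gamma_{G_n}(x,y)$. Consequently the fibers $\pi^{-1}(\gamma)$, $\gamma \in \Gamma_{G_n}(x,y)$, partition $\Gamma_{\bar{G_n}}(x,y)$. Since all path probabilities are nonnegative, the ensuing countable double sum may be rearranged freely, and combining the partition with Lemma \ref{lemma:PathProbabilityFiniteAndExtendedRandomWalk} gives
\begin{align*}
\bP[\tau_y < \tau_x^+]
&= \sum_{\gamma \in \Gamma_{G_n}(x,y)} \sum_{\bar\gamma \in \pi^{-1}(\gamma)} \bP(\bar\gamma)
= \sum_{\gamma \in \Gamma_{G_n}(x,y)} \frac{(c_n)_x}{c_x}\, \P^n_x(\gamma)
= \frac{(c_n)_x}{c_x}\, \P^n_x[\tau_y < \tau_x^+].
\end{align*}

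I do not expect a genuine obstacle: the substantive combinatorial identity is already contained in Lemma \ref{lemma:PathProbabilityFiniteAndExtendedRandomWalk}. The only points needing a line of justification are the path decomposition of $\{\tau_y < \tau_x^+\}$, the claim that $\pi$ is a surjection whose fibers partition its domain, and the (harmless) rearrangement of a nonnegative double series; none of these requires recurrence of $G_n$ or $\bar{G_n}$, so the argument is purely path-combinatorial.
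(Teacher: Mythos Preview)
Your argument is correct and is essentially the paper's own proof: both decompose $\{\tau_y<\tau_x^+\}$ into path cylinders, use that the fibers of $\pi$ partition $\Gamma_{\bar{G_n}}(x,y)$, and apply Lemma~\ref{lemma:PathProbabilityFiniteAndExtendedRandomWalk}. The only cosmetic difference is that the paper sums starting from $\P^n_x$ and factors out $c_x/(c_n)_x$, whereas you sum starting from $\bP$; your added remarks on $x\neq y$, surjectivity of $\pi$, and rearrangement of nonnegative series are sound and simply make explicit what the paper leaves implicit.
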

\begin{proof}
Using
\[
\Gamma_{\bar{G_n}}(x,y) = \bigsqcup_{\gamma \in \Gamma_{G_n}(x,y)} \pi^{-1}(\gamma),
\]
we compute
\begin{align*}
\P_x^n[\tau_y < \tau_x^+] & = \sum_{\gamma \in \Gamma_{G_n}(x,y)} \P_x^n(\gamma)  = \sum_{\gamma \in \Gamma_{G_n}(x,y)} \left( \frac{c_x}{(c_n)_x} \cdot \sum_{\bar{\gamma} \in \pi^{-1}(\gamma)} \bar{\P_x^n}(\bar{\gamma}) \right)\\
& = \frac{c_x}{(c_n)_x} \cdot  \sum_{\bar{\gamma} \in \Gamma_{\bar{G_n}}(x,y)} \bar{\P_x^n}(\bar{\gamma}) = \frac{c_x}{(c_n)_x} \cdot \bar{\P_x^n}[\tau_y < \tau_x^+].
\end{align*}
\end{proof}
Since we now have clarified the relation between $\P^n_x$, $\bP$ and $\P_x$, we can return our attention to (\ref{eq:ConvergenceOfProbabilites}).
\begin{prop}
	\label{prop:CharacterizationConvergence}
	For $x,y \in V$, $x \neq y$, we have 
	\begin{equation*}
	\lim_{n \to \infty} \P^n_x[\tau_y < \tau_x^+] = \P_x[\tau_y < \tau_x^+]
	\end{equation*}
	if and only if 
	\begin{equation}
	\label{eq:HittingTimeBoundaryLimitZero}
	\lim_{n \to \infty} \bar{\P^n_x}[\tau_{V_n^*} < \tau_y < \tau_x^+] = 0.
	\end{equation}
\end{prop}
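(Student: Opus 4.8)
The plan is to use the preceding proposition to pass from $\P^n_x$ to the extended walk $\bP$, and then to split the event $\{\tau_y<\tau_x^+\}$ under $\bP$ according to whether the walk visits a boundary copy in $V_n^*$ before reaching $y$. Fix $n$ large enough that $x,y\in V_n$; this is harmless since the statement concerns $n\to\infty$.

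First I would record the partition identity
\[
\bP[\tau_y<\tau_x^+] \;=\; \bP[\tau_y<\tau_x^+,\ \tau_{V_n^*}>\tau_y] \;+\; \bP[\tau_{V_n^*}<\tau_y<\tau_x^+].
\]
This is just finite additivity: since $y\in V_n$ while $V_n^*=\bar{V_n}\setminus V_n$, the times $\tau_{V_n^*}$ and $\tau_y$ can never be simultaneously finite and equal, so on $\{\tau_y<\tau_x^+\}$ (where $\tau_y<\infty$) exactly one of $\tau_{V_n^*}>\tau_y$ and $\tau_{V_n^*}<\tau_y$ occurs, and $\{\tau_{V_n^*}<\tau_y,\ \tau_y<\tau_x^+\}=\{\tau_{V_n^*}<\tau_y<\tau_x^+\}$.

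The heart of the argument is the first term. Writing $\{\tau_y<\tau_x^+\}$ as the disjoint union of cylinders over paths $x\to y$, the extra constraint $\tau_{V_n^*}>\tau_y$ selects exactly those paths that stay inside $V_n$ until they first hit $y$, i.e.\ the paths in $\Gamma_{\bar{G_n}}(x,y;V_n)=\Gamma_G(x,y;V_n)$; and for each such $\gamma$ one has $\bP(\gamma)=\P_x(\gamma)$ by Lemma~\ref{lemma:RelationOfRandomWalks}. Hence
\[
\bP[\tau_y<\tau_x^+,\ \tau_{V_n^*}>\tau_y] \;=\; \sum_{\gamma\in\Gamma_G(x,y;V_n)}\P_x(\gamma).
\]
Now the summands no longer depend on $n$ and $\Gamma_G(x,y;V_n)\uparrow\Gamma_G(x,y)$, so the Monotone Convergence Theorem --- now genuinely applicable, unlike in (\ref{eq:ConvergenceSumsOfPaths}) --- yields
\[
\lim_{n\to\infty}\bP[\tau_y<\tau_x^+,\ \tau_{V_n^*}>\tau_y] \;=\; \sum_{\gamma\in\Gamma_G(x,y)}\P_x(\gamma) \;=\; \P_x[\tau_y<\tau_x^+].
\]

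Finally I would assemble the pieces. By the preceding proposition, $\bP[\tau_y<\tau_x^+]=\frac{(c_n)_x}{c_x}\,\P^n_x[\tau_y<\tau_x^+]$, so the partition identity becomes
\[
\frac{(c_n)_x}{c_x}\,\P^n_x[\tau_y<\tau_x^+] \;=\; \bP[\tau_y<\tau_x^+,\ \tau_{V_n^*}>\tau_y] \;+\; \bP[\tau_{V_n^*}<\tau_y<\tau_x^+].
\]
Since $(c_n)_x\to c_x$ and the first term on the right converges to $\P_x[\tau_y<\tau_x^+]$, letting $n\to\infty$ shows that $\P^n_x[\tau_y<\tau_x^+]\to\P_x[\tau_y<\tau_x^+]$ if and only if $\bP[\tau_{V_n^*}<\tau_y<\tau_x^+]\to0$, which is~(\ref{eq:HittingTimeBoundaryLimitZero}). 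I do not expect a real obstacle; the one point that needs care is the bookkeeping in the key step --- that conditioning on $\tau_{V_n^*}>\tau_y$ singles out precisely $\Gamma_G(x,y;V_n)$ and that $\bP$ and $\P_x$ agree on those paths --- after which everything reduces to monotone convergence with $n$-independent summands and the elementary fact $(c_n)_x/c_x\to1$.
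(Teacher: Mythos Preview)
Your proof is correct and follows essentially the same route as the paper: both decompose $\bP[\tau_y<\tau_x^+]$ according to whether the extended walk hits $V_n^*$ before $y$, identify the ``stays in $V_n$'' piece with $\sum_{\gamma\in\Gamma_G(x,y;V_n)}\P_x(\gamma)$ via Lemma~\ref{lemma:RelationOfRandomWalks}, pass to the limit by monotone convergence, and use the preceding proposition together with $(c_n)_x\to c_x$ to conclude. The only cosmetic difference is that the paper phrases the decomposition in terms of path sets $\Gamma_{\bar{G_n}}(x,y)\setminus\Gamma_{\bar{G_n}}(x,y;V_n)$ while you phrase it in terms of the event $\{\tau_{V_n^*}<\tau_y\}$.
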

\begin{proof}
	We have
	\begin{align}
	\label{eq:EscapeProbInfiniteRWInTermsOfExtendedRW}
	\P_x[\tau_y <\tau_x^+] & = \sum_{\gamma \in \Gamma_G(x,y)} \P_x(\gamma) = \lim_{n \to \infty} \sum_{\gamma \in \Gamma_G(x,y;V_n)} \P_x(\gamma) \\
	\notag & = \lim_{n \to \infty} \sum_{\gamma \in \Gamma_G(x,y;V_n)} \bar{\P^n_x}(\gamma)\notag
	\end{align}
	and
	\begin{equation}
		\label{eq:EscapeProbFiniteRWInTermsOfExtendedRW}
		\P^n_x[\tau_y < \tau_x^+] = \frac{c_x}{(c_n)_x} \cdot \bar{\P^n_x}[\tau_y < \tau_x^+] = \frac{c_x}{(c_n)_x} \cdot  \sum_{\gamma \in \Gamma_{\bar{G_n}}(x,y)} \bar{\P^n_x}(\gamma).
	\end{equation}
	Since $\Gamma_G(x,y;V_n) = \Gamma_{\bar{G_n}}(x,y;V_n)$ and $(c_n)_x \to c_x$, it follows that $\P^n_x[\tau_y < \tau_x] \to \P_x[\tau_y < \tau_x]$ holds if and only if
	\[
	\lim_{n \to \infty} \sum_{
		\substack{
			\gamma \in \Gamma_{\bar{G_n}}(x,y)\\ 
			\gamma \notin \Gamma_{\bar{G_n}}(x,y;V_n)
		}} \bar{\P^n_x}(\gamma)\ = 0.
	\]
	This is the same as
	\[
	\lim_{n \to \infty} \bar{\P^n_x}[\tau_{V_n^*} < \tau_y < \tau_x^+] = 0.
	\]
\end{proof}
Using the same approach, we can also characterize when (\ref{eq:ConvergenceOfProbabilitesLowerBound}) holds.
\begin{prop}
	\label{prop:CharacterizationConvergenceLowerBound}
	For $x,y \in V$, $x \neq y$, we have 
	\begin{equation*}
	\lim_{n \to \infty} \P^n_x[\tau_y < \tau_x^+] = \P_x[\tau_y \leq \tau_x^+]
	\end{equation*}
	if and only if 
	\begin{equation}
	\label{eq:HittingTimeBoundaryLimitNever}
	\lim_{n \to \infty} \bar{\P^n_x}[\tau_{V_n^*} < \tau_y < \tau_x^+] = \P_x[\tau_x^+ = \tau_y = \infty]
	\end{equation}
	which in turn is equivalent to 
	\begin{equation}
	\label{eq:HittingTimeBoundaryLimitNever2}
	\lim_{n \to \infty} \bar{\P^n_x}[\tau_{V_n^*} < \tau_x^+ < \tau_y] = 0.
	\end{equation}
\end{prop}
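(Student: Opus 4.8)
The plan is to establish the first equivalence along the lines of the proof of Proposition~\ref{prop:CharacterizationConvergence}. Splitting $\Gamma_{\bar{G_n}}(x,y)$ into the paths staying in $V_n$ and those hitting $V_n^*$ gives
\begin{equation*}
\bar{\P^n_x}[\tau_y < \tau_x^+] = \sum_{\gamma \in \Gamma_G(x,y;V_n)} \bar{\P^n_x}(\gamma) + \bar{\P^n_x}[\tau_{V_n^*} < \tau_y < \tau_x^+],
\end{equation*}
and by (\ref{eq:EscapeProbInfiniteRWInTermsOfExtendedRW}) the first sum increases to $\P_x[\tau_y < \tau_x^+]$, while by (\ref{eq:EscapeProbFiniteRWInTermsOfExtendedRW}) and $(c_n)_x \to c_x$ one has $\lim_n \P^n_x[\tau_y < \tau_x^+] = \lim_n \bar{\P^n_x}[\tau_y < \tau_x^+]$ whenever either limit exists. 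Hence $\lim_n \P^n_x[\tau_y < \tau_x^+] = \P_x[\tau_y \le \tau_x^+]$ is equivalent to $\lim_n \bar{\P^n_x}[\tau_{V_n^*} < \tau_y < \tau_x^+] = \P_x[\tau_y \le \tau_x^+] - \P_x[\tau_y < \tau_x^+] = \P_x[\tau_y = \tau_x^+]$. Since $x \neq y$, no trajectory can satisfy $\tau_y = \tau_x^+ < \infty$, so $\P_x[\tau_y = \tau_x^+] = \P_x[\tau_x^+ = \tau_y = \infty]$, which is (\ref{eq:HittingTimeBoundaryLimitNever}).

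For the second equivalence I would start from a disjoint decomposition in the finite, hence recurrent and irreducible, graph $\bar{G_n}$: there $\tau_y, \tau_x^+ < \infty$ almost surely and $\tau_y \neq \tau_x^+$ (again because $x \neq y$), so on $\{\tau_{V_n^*} < \min\{\tau_y, \tau_x^+\}\}$ exactly one of $\tau_y < \tau_x^+$, $\tau_x^+ < \tau_y$ occurs and
\begin{equation*}
\bar{\P^n_x}[\tau_{V_n^*} < \min\{\tau_y, \tau_x^+\}] = \bar{\P^n_x}[\tau_{V_n^*} < \tau_y < \tau_x^+] + \bar{\P^n_x}[\tau_{V_n^*} < \tau_x^+ < \tau_y].
\end{equation*}
Since $\bar{p_n}$ agrees with $p$ on $V_n$ (Lemma~\ref{lemma:RelationOfRandomWalks}) and, for $v \in \partial_i V_n$, $\bar{p_n}(v, \bar{v}) = \big(\sum_{w \notin V_n} c(v,w)\big)/c_v = \P_x[\omega_1 \notin V_n \mid \omega_0 = v]$, summing over the initial segment up to the first boundary step shows
\begin{equation*}
\bar{\P^n_x}[\tau_{V_n^*} < \min\{\tau_y, \tau_x^+\}] = \P_x[\tau_{V \setminus V_n} < \min\{\tau_y, \tau_x^+\}],
\end{equation*}
where $\tau_{V \setminus V_n}$ denotes the first exit time of $V_n$.

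It then remains to compute the limit of the last quantity. Because $V_n \subseteq V_{n+1}$ forces $\tau_{V \setminus V_n} \le \tau_{V \setminus V_{n+1}}$, the events $\{\tau_{V \setminus V_n} < \min\{\tau_y, \tau_x^+\}\}$ decrease, so their probabilities converge to $\P_x$ of the intersection; a trajectory lies in the intersection iff $\min\{\tau_y, \tau_x^+\} = \infty$ and it leaves every $V_n$, so the intersection equals $\{\tau_x^+ = \tau_y = \infty\}$ minus the event that the walk never returns to $x$, never hits $y$, yet stays in some finite $V_n$ forever. That event is $\P_x$-null: if $G$ is transient, no vertex is visited infinitely often, so the walk a.s.\ leaves every finite set; if $G$ is recurrent, $\P_x[\tau_x^+ = \infty] = 0$ already. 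Hence $\lim_n \P_x[\tau_{V \setminus V_n} < \min\{\tau_y, \tau_x^+\}] = \P_x[\tau_x^+ = \tau_y = \infty]$, and by the decomposition above the two nonnegative sequences $\bar{\P^n_x}[\tau_{V_n^*} < \tau_y < \tau_x^+]$ and $\bar{\P^n_x}[\tau_{V_n^*} < \tau_x^+ < \tau_y]$ sum to a sequence converging to $\P_x[\tau_x^+ = \tau_y = \infty]$; consequently the first converges to that value if and only if the second converges to $0$, i.e. (\ref{eq:HittingTimeBoundaryLimitNever}) $\Leftrightarrow$ (\ref{eq:HittingTimeBoundaryLimitNever2}). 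The main obstacle I expect is this last step: correctly identifying the limiting intersection and ruling out the "eventually trapped in a finite set" contribution is exactly where the global transience/recurrence dichotomy must enter, whereas the path-wise identification of $\bar{\P^n_x}$ with $\P_x$ before the exit time and the decomposition in $\bar{G_n}$ are routine once set up with care.
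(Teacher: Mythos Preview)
Your proof is correct and follows essentially the same route as the paper's: split $\bar{\P^n_x}[\tau_y<\tau_x^+]$ into the ``stays in $V_n$'' and ``hits $V_n^*$'' parts to get the first equivalence, and then use the decomposition $\bar{\P^n_x}[\tau_{V_n^*}<\min\{\tau_y,\tau_x^+\}]=\bar{\P^n_x}[\tau_{V_n^*}<\tau_y<\tau_x^+]+\bar{\P^n_x}[\tau_{V_n^*}<\tau_x^+<\tau_y]$ together with the identification $\bar{\P^n_x}[\tau_{V_n^*}<\min\{\tau_y,\tau_x^+\}]=\P_x[\tau_{V\setminus V_n}<\min\{\tau_y,\tau_x^+\}]\to\P_x[\tau_x^+=\tau_y=\infty]$ for the second. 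The only difference is emphasis: the paper asserts this last limit in one line, while you spell out why the ``trapped in a finite set forever'' contribution is $\P_x$-null via the recurrence/transience dichotomy---a justification the paper omits but which is indeed the content of that step.
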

\begin{proof}
	Using (\ref{eq:EscapeProbInfiniteRWInTermsOfExtendedRW}) and (\ref{eq:EscapeProbFiniteRWInTermsOfExtendedRW}) from the proof of Proposition \ref{prop:CharacterizationConvergence}, we have
	\[
	\P_x[\tau_x^+ \leq \tau_y] = \P_x[\tau_x^+ = \tau_y = \infty] + \lim_{n \to \infty} \sum_{\gamma \in \Gamma_{\bar{G_n}}(x,y;V_n)} \bar{\P^n_x}(\gamma)
	\]
	and
	\[
	\lim_{n \to \infty} \P^n_x[\tau_x^+ < \tau_y] = \lim_{n \to \infty} \sum_{\gamma \in \Gamma_{\bar{G_n}}(x,y)} \bar{\P^n_x}(\gamma).
	\]
	Hence, we have convergence as desired if and only if
	\[
	\P_x[\tau_x^+ = \tau_y = \infty] = \lim_{n \to \infty} \sum_{
		\substack{
			\gamma \in \Gamma_{\bar{G_n}}(x,y)\\ 
			\gamma \notin \Gamma_{\bar{G_n}}(x,y;V_n)
	}} \bar{\P^n_x}(\gamma) \ = \lim_{n \to \infty} \bP[\tau_{V_n^*} < \tau_y < \tau_x^+].
	\]
	On the other hand, we have
	\begin{align*}
	\P_x[\tau_x^+ = \tau_y = \infty] & = \lim_{n \to \infty} \P_x[\tau_{V \setminus V_n} < \min(\tau_x^+, \tau_y)]\\
	& = \lim_{n \to \infty} \bP[\tau_{V_n^*} < \min(\tau_x^+, \tau_y)]\\
	& = \lim_{n \to \infty} \left( \bP[\tau_{V_n^*} < \tau_x^+ < \tau_y] + \bP[\tau_{V_n^*} < \tau_y < \tau_x^+] \right)
	\end{align*}
	which implies the second claim.
\end{proof}
\begin{rem}
	An equivalent approach would be to consider a \emph{lazy} random walk on $G_n$ which has the same transition probabilities $p(v,w)$ as $\P_x$ for $v \neq w$ but stays at $v$ with probability 
	\[
		\sum_{w \in V \setminus V_n} p(v,w) = \P_v[\omega_1 \in V \setminus V_n].
	\]
	In that case the notion of "stepping out of $V_n$" would be modeled by staying at any vertex $v \in V_n$.
\end{rem}

\section{Embedding $\mathcal{T}$ into transient graphs}
We will show that whenever a graph $G$ is transient and not part of an infinite line, one can find a subgraph of $G$ which is similar to $\mathcal{T}$ from Section \ref{section:InfiniteT}. We will also show that this is sufficient for (\ref{eq:HittingTimeBoundaryLimitZero}) not to hold.

%\begin{lemma}
%	\label{lemma:TransientWalkLeavesEveryFiniteSet}
%	Let $G$ be a connected, transient graph and $F \subseteq V$ a finite set of vertices. For any $x \in F$, we have
%	$$\P_x[\tau_{\partial_o F} < \infty] = 1$$
%	and there exists $v \in \partial_o F$ such that
%	$$\P_v[\tau_F = \infty] > 0.$$
%\end{lemma}
%\begin{proof}
%	\todo{Is the proof really needed?}
%	First, note that because $G$ is transient and thus infinite, $\partial_o F \neq \emptyset$. By definition of transience, we have
%	$$\P_x[V_x = \infty] = 0$$
%	for every $x \in V$. For $x \in F$, we have
%	\begin{align*}
%	0 & \leq \P_x[\tau_{\partial_o F} = \infty]  \leq \P_x[V_F = \infty] \\
%	& \leq \sum_{y \in F}\P_x[V_y = \infty] = \sum_{y \in F} \P_x[\tau_y < \infty] \cdot \P_y[V_y = \infty] = 0.
%	\end{align*}
%	Hence, $\P_x[\tau_{\partial_o F} = \infty] = \P_x[V_F = \infty] = 0$ for all $x \in F$. It follows that $\P_x[\tau_{\partial_o F} < \infty] = 1$.
%	
%	For the second statement, we compute
%	\begin{align*}
%	1 & = \P_x[V_F<\infty]  = \sum_{T = 0}^{\infty} \P_x[\omega_T \in F, \omega_n \notin F \all n > T]\\
%	& = \sum_{T = 0}^{\infty} \sum_{v \in \partial_o F} \P_x[\omega_T \in F, \omega_n \notin F \all n > T, \omega_{T+1} = v]\\
%	& = \sum_{T = 0}^{\infty} \sum_{v \in \partial_o F} \P_x[\omega_T \in F, \omega_{T+1} = v] \cdot \P_v[\tau_F = \infty].
%	\end{align*}
%	Hence, there exists some $v \in \partial_o F$ such that $\P_v[\tau_F= \infty] > 0$.
%\end{proof}
\begin{prop}
	\label{prop:TransientTImpliesStrongTransience}
	Let $G$ be a transient, connected graph which is not a subgraph of a line. Then, there exist $x,y,z \in V$ such that $x \neq y$, $(x,z,y)$ is a path in $G$ and
	$$\P_z [\tau_x = \tau_y = \infty] > 0.$$
\end{prop}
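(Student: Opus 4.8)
The plan is to produce a vertex $z$ with two ``independent'' ways to escape to infinity — one avoiding a neighbour $x$ and one avoiding a neighbour $y$ — which forces a positive chance of escaping while avoiding both. The structural hypothesis (transient, not a subgraph of a line) should be used in two stages: first to locate a vertex of degree $\geq 3$ or a more refined branching configuration, and second to route transient escape paths through the two relevant branches.

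First I would use that $G$ is not a subgraph of a line. If every vertex had degree $\leq 2$, then $G$ (being connected) would be either a finite path, a bi-infinite line, or a one-sided infinite ray, each of which is a subgraph of a line. Hence there is a vertex $z$ with at least three neighbours; pick two of them, call them $x$ and $y$, and a third neighbour $w$ (possibly $w \in \{x,y\}$ is \emph{not} allowed, so $w \notin \{x,y\}$). Now $(x,z,y)$ is a path in $G$, so the combinatorial requirement of the statement is met; it remains to get the probabilistic lower bound. Write $G' := G \restr_{V \setminus \{z\}}$ and decompose the walk from $z$ by its first step: with probability $p(z,w) > 0$ the walk moves to $w$, and from $w$ the event $\{\tau_x = \tau_y = \infty\}$ for the walk on $G$ is controlled by the behaviour of the walk on $G'$ (after removing $z$, a walk from $w$ that never returns to $z$ automatically never hits $x$ or $y$ through $z$; and it can only reach $x$ or $y$ at all via a path in $G' \cup \{z\}$). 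So it suffices to show $\P_w^{G'}[\tau_x = \tau_y = \tau_z = \infty] > 0$ for \emph{some} neighbour $w$ of $z$, or more flexibly that the walk on $G$ from $z$ avoids $\{x,y\}$ forever with positive probability.

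The cleanest route is via transience and the last-exit / first-hit structure: since $G$ is transient, $\P_z[\tau_{\{x,y\}}^+ < \infty] < 1$ would already finish it, but that need not hold. Instead I would argue as follows. Transience of $G$ gives some vertex $u$ with $\P_z[\tau_u = \infty] > 0$; more usefully, transience implies that from $z$ the walk escapes every finite set eventually, and a standard argument (conditioning on the last visit to $\{x,y\}$, using the strong Markov property at that time together with the fact that the walk is transient so such a last visit exists a.s. on non-escape to infinity through these points) shows that $\P_z[\tau_{\{x,y\}}^+ = \infty] = \P_z[\tau_{\{x,y\}}^+ = \infty]$ can be compared to escape probabilities from $x$ and from $y$. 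Concretely, split $\{\tau_x < \infty \text{ or } \tau_y < \infty\}$: on this event let $T$ be the last time the walk is in $\{x,y\}$ (finite a.s. by transience); at time $T$ the walk is at $x$ or at $y$ and thereafter never returns to $\{x,y\}$. If both $\P_x[\tau_{\{x,y\}}^+ = \infty] $ and $\P_y[\tau_{\{x,y\}}^+=\infty]$ were somehow forcing every escape path to pass arbitrarily close to $x$ or $y$, we would contradict the existence of the third branch at $z$: the path $z \to w \to \cdots$ staying in the transient part away from $x,y$ has positive probability because, in the graph $G \setminus \{x,y\}$, the component containing $z$ is still transient (removing finitely many vertices preserves transience), and the walk from $z$ in $G$ has positive probability of making its first two steps $z \to w$ and then following a transient escape path in that component without ever hitting $x$ or $y$.

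The main obstacle I anticipate is the step ``the component of $z$ in $G \setminus \{x,y\}$ is transient,'' which is true (deleting finitely many vertices from a transient network leaves a transient network, e.g. by Rayleigh monotonicity / the fact that transience is a tail property, or directly: $\P_z[\tau_{\{x,y\}}^+ = \infty \text{ in } G] > 0$ because an escaping path exists and can be taken to avoid the finite set $\{x,y\}$ with positive probability) — but making it rigorous requires care, since after deleting $\{x,y\}$ the vertex $z$ might lie in a \emph{recurrent} component even though $G$ is transient; this is exactly why the hypothesis that $z$ has a \emph{third} neighbour $w$, and more generally the global structure, matters. I would handle this by choosing $z$, $x$, $y$ not merely as ``some degree-$3$ vertex'' but so that transient escape genuinely happens through the $w$-branch — e.g. by first fixing a transient escape path $\gamma = (z, \gamma_1, \gamma_2, \ldots)$ from $z$ (one with $\P_z$-positive probability of the walk following an initial segment and then never returning, which exists by transience), letting $w = \gamma_1$, and then choosing $x, y$ to be two neighbours of $z$ \emph{other than} $w$ — which is possible precisely because $\deg(z) \geq 3$, after possibly moving $z$ along $\gamma$ to the first vertex of degree $\geq 3$ encountered (such a vertex exists on $\gamma$, else $\gamma$ together with $G$ near it is a line, contradicting the hypothesis, using that $\gamma$ is infinite and $G$ is not a subgraph of a line). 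With that choice the escape along $\gamma$ from $z$ never touches $x$ or $y$, so $\P_z[\tau_x = \tau_y = \infty] \geq \P_z[\text{walk follows a non-returning prefix of } \gamma] > 0$.
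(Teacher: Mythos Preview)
Your proposal contains a genuine gap at the final step. You correctly locate a vertex $z$ of degree at least three and correctly identify the danger that the component of $z$ in $G \setminus \{x,y\}$ may fail to be transient. But your repair via a ``transient escape path'' $\gamma$ does not close the gap. The only property you secure for $\gamma$ is that it starts at $z$, takes its first step to some neighbour $w \neq x,y$, and (in some sense) never returns to $z$. From this you conclude that ``the escape along $\gamma$ from $z$ never touches $x$ or $y$,'' which is simply not justified: a walk path that leaves $z$ through $w$ and never returns to $z$ can perfectly well visit $x$ or $y$ (or both) at later times, since $x$ and $y$ may be reachable from $w$ without passing through $z$. Transience only gives you $\P_z[\tau_z^+ = \infty] > 0$, not $\P_z[\tau_{\{x,y,z\}}^+ = \infty] > 0$; indeed the latter can fail for a particular choice of $z,x,y$ (e.g.\ on a transient half-line, from $0$ the walk must hit $1$). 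So the inequality $\P_z[\tau_x = \tau_y = \infty] \geq \P_z[\text{walk follows a non-returning prefix of } \gamma]$ is unsupported.

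The paper's proof fixes exactly this by working with the \emph{finite set} $F$ of three chosen neighbours of $z$ rather than with the single vertex $z$. Transience of $G$ guarantees that some $v \in \partial_o F$ satisfies $\P_v[\tau_F = \infty] > 0$ (the walk eventually leaves any finite set forever, so decompose at the last exit). If $v = z$ one is done immediately; otherwise $v$ is adjacent to some $v' \in F$, and one sets $x,y$ to be the \emph{other two} elements of $F$, so that the deterministic two-step path $(z,v',v)$ avoids $\{x,y\}$ and from $v$ the walk avoids all of $F \supseteq \{x,y\}$ with positive probability. The point is that $x$ and $y$ are chosen \emph{after} locating the escape vertex $v$, so that the escape is guaranteed to avoid them --- this is precisely the freedom your argument lacks, since you fix $x,y$ before controlling where the escape goes.
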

\begin{proof}
	\begin{figure}
		\centering
		\begin{tikzpicture}[scale=1.2,
		vertex/.style={draw, circle, fill=black!10, minimum size=5mm, inner sep=0.2mm, align=center},
		faded/.style={draw=black!60, dashed}
		]
		
		\draw [rounded corners,fill=blue,opacity=0.1] 
		(-0.5,0.5) -- 
		(-0.5,1.5) -- 
		(0.5,1.5) --
		(1.5,0.5) --
		(1.5,-0.5) --
		(0.5,-1.5) --
		(-0.5,-1.5) --
		(-0.5,-0.5) --
		(0,-0.5) arc (-90:90:0.5) --
		cycle;
		
		\node [color=blue](F) at (0.8,0.6) {$F$};
		
		\node [vertex] (Z) at (0,0) {$z$};
		\node [vertex] (X) at (0,1) {$x$};
		\node [vertex] (Y) at (0,-1) {$y$};
		\node [vertex] (V2) at (1,0) {$v'$};
		\node [vertex] (V) at (2,0) {$v$};
		
		\draw (Z) -- (X);
		\draw (Z) -- (Y);
		\draw (Z) -- (V2);
		\draw (V2) -- (V);
		
		\draw [faded] (Z) -- (-1.2,0.8);
		\draw [faded] (Z) -- (-1.5,0);
		\draw [faded] (Z) -- (-1.2,-0.8);
		
		%\draw [faded] (V) -- (3,1);
		\draw [faded] (V) -- +(0.9cm,0);
		%\draw [faded] (V) -- (3,-1);
		\draw [faded] (X) -- +(0,0.7cm);
		\draw [faded] (Y) -- +(0,-0.7cm);
		
		\end{tikzpicture}
		\caption{\label{fig:ProofVisualization}}
	\end{figure}
	Since $G$ is transient, it is infinite. If $G$ is not a subgraph of a line, then there exists some $z \in V$ with at least three adjacent vertices. Let $F$ be a set of exactly three neighbors of $z$. Since $G$ is transient and $F$ is finite, there exists $v \in \partial_o F$ such that 
	\[
	\P_v[\tau_{F} = \infty] > 0.
	\]
	If $v = z$, we can choose $x,y \in F$, $x \neq y$, and get 
	\[
	\P_z[\tau_x = \tau_y = \infty] \geq \P_v[\tau_F = \infty] > 0.
	\]
	If $v \neq z$, then there exists $v' \in F$ such that $(z,v',v)$ is a path in $G$. Let $x,y \in V$ such that $F = \set{x,y,v'}$, see Figure \ref{fig:ProofVisualization}. It follows that
	\begin{align*}
	\P_z[\tau_x = \tau_y = \infty] & \geq \P_z[\omega_1 = v', \omega_2 = v, \tau_x = \tau_y = \infty] \\
	& = p(z,v')\cdot p(v',v) \cdot \P_v[\tau_x = \tau_y = \infty]\\
	& \geq p(z,v')\cdot p(v',v) \cdot \P_v[\tau_F = \infty] > 0.
	\end{align*}
%	In any case, we compute
%	\begin{align*}
%	\P_x[\tau_y = \tau_x^+ = \infty] & \geq \P_x[\tau_y =\tau_x^+ = \infty, \omega_1 = z] = p(x,z) \cdot \P_z[\tau_x = \tau_y = \infty] > 0.
%	\end{align*}
%	and analogously, $\P_y[\tau_x = \tau_y^+ = \infty] > 0$.
\end{proof}
\begin{thm}
	\label{thm:MainResult}
	Let $G$ be a transient, connected graph. Then,
	\[
	\all x,y \in V: \lim_{n \to \infty} \bP[\tau_{V_n^*} < \tau_y < \tau_x^+] = 0
	\]
	holds if and only if $G$ is a subgraph of an infinite line.
\end{thm}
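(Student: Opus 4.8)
My plan is to prove the two implications of the equivalence separately. The implication ``$G$ a subgraph of an infinite line $\Rightarrow$ the limit vanishes'' is the easy one, and for the converse Proposition~\ref{prop:TransientTImpliesStrongTransience} does the work.

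For the easy direction, suppose $G$ is a subgraph of an infinite line. Since $G$ is connected and transient, it is an infinite one- or two-sided path, so I may identify $V$ with an integer interval $I$ having edges only between consecutive vertices. Any connected $V_n$ is then a finite subinterval $[a_n,b_n]\cap I$, so $\partial_i V_n\subseteq\{a_n,b_n\}$. Fix $x\neq y$, say $x<y$ in $I$; for all large $n$ we have $a_n\le x<y\le b_n$. I claim that $\Gamma_{\bar G_n}(x,y)$ then contains no path meeting $V_n^*$: to reach $\bar b_n$ from $x$ the walk must first pass through $y$ (so $\tau_y<\tau_{\bar b_n}$ always), while after reaching $\bar a_n$ it must pass through $x$ before it can reach $y$ (so $\tau_x^+<\tau_y$ on $\{\tau_{\bar a_n}<\tau_y\}$); the same holds in the boundary cases $x=a_n$ and $y=b_n$. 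Hence $\bar\P^n_x[\tau_{V_n^*}<\tau_y<\tau_x^+]=0$ for all large $n$, so the limit is $0$.

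For the converse I argue by contraposition: let $G$ be transient and not a subgraph of a line. Proposition~\ref{prop:TransientTImpliesStrongTransience} yields distinct vertices $x,y$ and a vertex $z\notin\{x,y\}$ with $(x,z,y)$ a path and $q:=\P_z[\tau_x=\tau_y=\infty]>0$. Fix any exhaustion with $G_n$ connected; for $n$ large $x,y,z\in V_n$, so $\bar p_n(x,z)=p(x,z)$ and $\bar p_n(y,z)=p(y,z)$ by Lemma~\ref{lemma:RelationOfRandomWalks}. Three ingredients combine. First, the chain of equalities used in the proof of Proposition~\ref{prop:CharacterizationConvergenceLowerBound} gives $\lim_n\bar\P^n_z[\tau_{V_n^*}<\tau_x\wedge\tau_y]=\P_z[\tau_x=\tau_y=\infty]=q$. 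Second, the strong Markov property at time $\tau_{V_n^*}$ together with $\bar\P^n_w[\tau_y<\tau_x]+\bar\P^n_w[\tau_x<\tau_y]=1$ (valid for every $w$ since $\bar G_n$ is finite, connected and irreducible) yields
\[
\bar\P^n_z[\tau_{V_n^*}<\tau_y<\tau_x]+\bar\P^n_z[\tau_{V_n^*}<\tau_x<\tau_y]=\bar\P^n_z[\tau_{V_n^*}<\tau_x\wedge\tau_y].
\]
Third, conditioning on the first step $x\to z$ (respectively $y\to z$) and using $z\notin\{x,y\}$, $z\in V_n$, gives
\[
\bar\P^n_x[\tau_{V_n^*}<\tau_y<\tau_x^+]\ \ge\ p(x,z)\,\bar\P^n_z[\tau_{V_n^*}<\tau_y<\tau_x],
\]
\[
\bar\P^n_y[\tau_{V_n^*}<\tau_x<\tau_y^+]\ \ge\ p(y,z)\,\bar\P^n_z[\tau_{V_n^*}<\tau_x<\tau_y].
\]
Adding these, with $p_*:=\min\{p(x,z),p(y,z)\}>0$, I obtain
\[
\bar\P^n_x[\tau_{V_n^*}<\tau_y<\tau_x^+]+\bar\P^n_y[\tau_{V_n^*}<\tau_x<\tau_y^+]\ \ge\ p_*\,\bar\P^n_z[\tau_{V_n^*}<\tau_x\wedge\tau_y]\ \longrightarrow\ p_*q>0.
\]
So at least one of these two sequences does not tend to $0$, i.e.\ the displayed condition fails for the pair $(x,y)$ or the pair $(y,x)$; this is the desired contrapositive. (Each such limit in fact exists and equals $\tfrac{1}{c_a R^F(a,b)}-\P_a[\tau_b<\tau_a^+]$ by Rayleigh's monotonicity principle, but this is not needed.)

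The routine parts --- the line case, the time-index bookkeeping behind the first-step estimate, and the convergence $\bar\P^n_z[\tau_{V_n^*}<\tau_x\wedge\tau_y]\to q$ --- are all straightforward. The one non-obvious point, which I expect to be the main obstacle, is recognising that one should \emph{not} try to bound $\bar\P^n_x[\tau_{V_n^*}<\tau_y<\tau_x^+]$ in isolation: once the walk has escaped to $V_n^*$ it may be ``much closer'' to $x$ than to $y$, so that single probability can genuinely be small. The fix is to treat the ordered pairs $(x,y)$ and $(y,x)$ simultaneously and use $\bar\P^n_w[\tau_y<\tau_x]+\bar\P^n_w[\tau_x<\tau_y]=1$ to convert the escape probability $q$ into a lower bound for the sum.
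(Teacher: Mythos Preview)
Your proof is correct and follows essentially the same route as the paper's: the line case is handled by the observation that no $x\to y$ path in $\bar G_n$ can visit $V_n^*$, and the converse uses Proposition~\ref{prop:TransientTImpliesStrongTransience} to produce $x,y,z$, then the identity $\bar\P^n_z[\tau_{V_n^*}<\tau_x\wedge\tau_y]=\bar\P^n_z[\tau_{V_n^*}<\tau_y<\tau_x]+\bar\P^n_z[\tau_{V_n^*}<\tau_x<\tau_y]$ together with a first-step bound from $x$ (resp.\ $y$) to $z$. The only cosmetic difference is that the paper argues ``WLOG $\limsup_n \bar\P^n_z[\tau_{V_n^*}<\tau_y<\tau_x]>0$'' and bounds one term, whereas you keep both pairs $(x,y)$ and $(y,x)$ symmetrically and add the inequalities; the logical content is identical.
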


\begin{proof}
First, assume that $G$ is a subgraph of an infinite line and let $x,y \in V$, $x \neq y$. Then, for any $n \in \N$ sufficiently big, we have
\[
\Gamma_{\bar{G_n}}(x,y) \setminus \Gamma_{\bar{G_n}}(x,y;V_n) = \emptyset,
\]
i.e. there exists no path $x \to y$ which leaves $V_n$ before reaching $y$. Hence,
\[
\lim_{n \to \infty} \bP[\tau_{V_n^*} < \tau_y < \tau_x^+] = 0.
\]

To prove the converse direction, suppose that $G$ is not a subgraph of a line. By Proposition \ref{prop:TransientTImpliesStrongTransience}, we know that there exist distinct vertices $x,y,z \in V$ such that $(x,z,y)$ is a path in $G$ and $\P_z[\tau_x = \tau_y = \infty] > 0$. Hence,
\begin{align*}
0 & < \P_z[\tau_x = \tau_y = \infty] \\
& = \lim_{n \to \infty} \P_z[\tau_{\partial_o V_n} < \min(\tau_x, \tau_y)]\\
& = \lim_{n \to \infty} \bar{\P_z^n}[\tau_{V_n^*} < \min(\tau_x,\tau_y)]\\
& = \lim_{n \to \infty} \left( \bar{\P_z^n}[\tau_{V_n^*} < \tau_x < \tau_y] + \bar{\P_z^n}[\tau_{V_n^*} < \tau_y < \tau_x]\right)\\
& \leq \limsup_n \bar{\P_z^n}[\tau_{V_n^*} < \tau_x < \tau_y] + \limsup_n \bar{\P_z^n}[\tau_{V_n^*} < \tau_y < \tau_x].
\end{align*}
Without loss of generality assume that $\limsup_{n \to \infty} \bar{\P_z^n}[\tau_{V_n^*} < \tau_y < \tau_x] > 0$. It follows that $\limsup_{n \to \infty}\bar{\P_x^n}[\tau_{V_n^*} < \tau_y < \tau_x^+] > 0$ because for all $n \in \N$, we have
\begin{align*}
\bar{\P_x^n}[\tau_{V_n^*} < \tau_y < \tau_x^+] & \geq \bar{\P_x^n}[\tau_z < \tau_{V_n^*} < \tau_y < \tau_x^+]\\
& = \bar{\P_x^n}[\tau_z < \min(\tau_{V_n^*}, \tau_x^+, \tau_y)] \cdot \bar{\P_z^n}[\tau_{V_n^*} < \tau_y < \tau_x^+]\\
& \geq p(x,z) \cdot \bar{\P_z^n}[\tau_{V_n^*} < \tau_y < \tau_x].
\end{align*}
\end{proof}

\begin{cor}
	\label{cor:MainResult}
	Let $G$ be a transient, connected graph. Then,
	\begin{equation*}
	R^F(x,y) = \frac{1}{c_x \cdot \P_x[\tau_y < \tau_x^+]}
	\end{equation*}
	holds for all $x,y \in V$ if and only if $G$ is a subgraph of an infinite line.
\end{cor}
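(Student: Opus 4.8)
The plan is to read off the corollary by concatenating three equivalences that are already in place: an elementary reduction of the claimed identity to a convergence of hitting probabilities, carried out at the end of Section~\ref{section:FreeEffRes}; Proposition~\ref{prop:CharacterizationConvergence}; and Theorem~\ref{thm:MainResult}. Throughout I would restrict to $x \neq y$, since the stated identity can only hold in that case: for $x = y$ one has $R^F(x,x) = 0$, whereas $\tau_x = 0$ forces $\P_x[\tau_x < \tau_x^+] = 1$ and hence a right-hand side equal to $1/c_x \neq 0$.

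First I would make the reduction from Section~\ref{section:FreeEffRes} fully explicit. For $n$ large enough that $x,y \in V_n$ we have $R_{G_n}(x,y) = \frac{1}{(c_n)_x\,\P^n_x[\tau_y < \tau_x^+]}$, with $(c_n)_x \to c_x \in (0,\infty)$. For a connected $G$ the increasing limit $R^F(x,y) = \lim_n R_{G_n}(x,y)$ is finite — bound $R_{G_n}(x,y)$ from above by the resistance of a fixed simple $x$--$y$ path, which is eventually contained in $G_n$ — and strictly positive, and likewise $\P_x[\tau_y < \tau_x^+] > 0$ because a simple $x$--$y$ path has positive $\P_x$-probability. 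Consequently $\lim_n \P^n_x[\tau_y < \tau_x^+]$ exists and equals $\frac{1}{c_x R^F(x,y)} \in (0,\infty)$, and taking reciprocals shows that $R^F(x,y) = \frac{1}{c_x\,\P_x[\tau_y < \tau_x^+]}$ holds if and only if $\lim_{n\to\infty}\P^n_x[\tau_y < \tau_x^+] = \P_x[\tau_y < \tau_x^+]$, i.e.\ if and only if (\ref{eq:ConvergenceOfProbabilites}) holds.

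Next, Proposition~\ref{prop:CharacterizationConvergence} turns (\ref{eq:ConvergenceOfProbabilites}), for a fixed pair $x \neq y$, into the statement $\lim_{n\to\infty}\bP[\tau_{V_n^*} < \tau_y < \tau_x^+] = 0$, that is, (\ref{eq:HittingTimeBoundaryLimitZero}). Since (\ref{eq:HittingTimeBoundaryLimitZero}) is trivially true when $x = y$ (then $\tau_x = 0$), requiring it for all $x \neq y$ is the same as requiring it for all $x, y \in V$. Finally, Theorem~\ref{thm:MainResult} states that, for a transient connected $G$, (\ref{eq:HittingTimeBoundaryLimitZero}) holds for all $x,y$ if and only if $G$ is a subgraph of an infinite line. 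Chaining the three equivalences yields exactly the corollary.

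There is no genuine obstacle here: given Theorem~\ref{thm:MainResult}, Corollary~\ref{cor:MainResult} is pure bookkeeping. The only points that need a moment's care are the degenerate case $x = y$ and the verification that $R^F(x,y)$, $\P_x[\tau_y<\tau_x^+]$ and $\lim_n\P^n_x[\tau_y<\tau_x^+]$ are all finite and nonzero, so that the passage between the two reciprocal forms is valid; both are immediate from connectedness of $G$.
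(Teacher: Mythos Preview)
Your proposal is correct and follows exactly the same route as the paper: reduce the identity to (\ref{eq:ConvergenceOfProbabilites}) via the discussion in Section~\ref{section:FreeEffRes}, translate that via Proposition~\ref{prop:CharacterizationConvergence} into (\ref{eq:HittingTimeBoundaryLimitZero}), and finish with Theorem~\ref{thm:MainResult}. You are more explicit than the paper about the degenerate case $x=y$ and the positivity/finiteness needed to pass between reciprocals, but the argument is otherwise identical.
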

\begin{proof}
	As seen in (\ref{eq:ConvergenceOfProbabilites}), the desired probabilistic representation (\ref{eq:FreeEffResProbRepr}) holds if and only if 
	\[
	\lim_{n \to \infty} \P_x^n[\tau_y < \tau_x^+] = \P_x[\tau_y < \tau_x^+].
	\]
	By Proposition \ref{prop:CharacterizationConvergence}, this is equivalent to 
	\[
	\lim_{n \to \infty} \bar{\P^n_x}[\tau_{V_n^*} < \tau_y < \tau_x^+] = 0
	\]
	and the claim follows by Theorem \ref{thm:MainResult}.
\end{proof}

\begin{thm}
	\label{thm:ConvergenceLowerBoundImpliesRecurrence}
	Let $G$ be an infinite graph. If
	\[
	\all x,y \in V: \lim_{n \to \infty} \bP[\tau_{V_n^*} < \tau_y < \tau_x^+] = \P_x[\tau_x^+ = \tau_y = \infty]
	\]
	holds, then $G$ is recurrent.
\end{thm}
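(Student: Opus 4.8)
The plan is to prove the contrapositive: assuming $G$ is transient, I will produce distinct $x,y\in V$ with $\limsup_{n\to\infty}\bar{\P^n_x}[\tau_{V_n^*}<\tau_x^+<\tau_y]>0$; by Proposition~\ref{prop:CharacterizationConvergenceLowerBound} (the equivalence with (\ref{eq:HittingTimeBoundaryLimitNever2})) this means the lower bound of (\ref{eq:BarlowBounds}) is not attained at $(x,y)$, so the hypothesis fails. I split along the paper's dichotomy. Suppose first that $G$ is a subgraph of an infinite line; being transient and infinite, $G$ is then, up to isomorphism, $\Z$ or $\{0,1,2,\dots\}$ with positive edge weights, and transience yields a vertex $x$, a neighbor $y$ of $x$, and an infinite simple path $P=(x=u_0,u_1,u_2,\dots)$ in $G$ avoiding $y$ with $\sum_{i\ge0}c(u_i,u_{i+1})^{-1}<\infty$, where $y$ lies on the side of $x$ opposite to $P$, so that every walk must pass through $x$ to get from $y$ to $P\setminus\{x\}$ or back. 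On the event that the walk on $\bar{G_n}$ from $x$ first steps to $u_1$ and then reaches $V_n^*$ before $\tau_x^+$, the walk stays on the $P$-side before $\tau_{V_n^*}$ (hence avoids $y$) and returns to $x$ in the finite graph $\bar{G_n}$ after $\tau_{V_n^*}$ still before reaching $y$; thus this event lies in $\{\tau_{V_n^*}<\tau_x^+<\tau_y\}$, and its probability is $p(x,u_1)$ times the gambler's-ruin probability that the birth--death walk along $P$ reaches the outer end of $V_n\cap P$ before $x$, which stays bounded below because $P$ is transient. This settles the line case.

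Now suppose $G$ is not a subgraph of an infinite line. By Proposition~\ref{prop:TransientTImpliesStrongTransience} there are distinct $x,y,z$ with $(x,z,y)$ a path in $G$ and $\P_z[\tau_x=\tau_y=\infty]>0$; throughout, take $n$ large enough that $x,y,z\in V_n$. I would first upgrade this to $\P_z[\tau_z^+=\tau_x=\tau_y=\infty]>0$: decomposing $\{\tau_x=\tau_y=\infty\}$ at the first return to $z$ gives, by the strong Markov property,
\[
\P_z[\tau_z^+=\tau_x=\tau_y=\infty]=\bigl(1-\P_z[\tau_z^+<\min(\tau_x,\tau_y)]\bigr)\,\P_z[\tau_x=\tau_y=\infty],
\]
and the prefactor is positive, since otherwise the walk would return to $z$ before $\{x,y\}$ almost surely, hence visit $z$ infinitely often, contradicting transience. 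Just as in the proof of Proposition~\ref{prop:CharacterizationConvergenceLowerBound}, $\bar{\P^n_z}[\tau_{V_n^*}<\min(\tau_z^+,\tau_x,\tau_y)]\to\P_z[\tau_z^+=\tau_x=\tau_y=\infty]>0$.

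Since $\bar{G_n}$ is finite and irreducible, after $\tau_{V_n^*}$ the walk hits exactly one of $z,x,y$ first, so the quantity above equals
\[
\bar{\P^n_z}[\tau_{V_n^*}<\tau_z^+<\min(\tau_x,\tau_y)]+\bar{\P^n_z}[\tau_{V_n^*}<\tau_x<\min(\tau_z^+,\tau_y)]+\bar{\P^n_z}[\tau_{V_n^*}<\tau_y<\min(\tau_z^+,\tau_x)],
\]
so at least one summand has positive $\limsup$. If the first does, the pair $(z,x)$ works, since $\bar{\P^n_z}[\tau_{V_n^*}<\tau_z^+<\tau_x]$ dominates it. If the second does, the pair $(x,y)$ works: prefixing a realizing trajectory by the step $x\to z$ shows $\bar{\P^n_x}[\tau_{V_n^*}<\tau_x^+<\tau_y]\ge p(x,z)\,\bar{\P^n_z}[\tau_{V_n^*}<\tau_x<\min(\tau_z^+,\tau_y)]$, because that extended trajectory leaves $V_n$ and then hits $x$ for the first time without ever meeting $y$. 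The third case is symmetric, with the pair $(y,x)$. In every case we obtain a pair at which the lower bound of (\ref{eq:BarlowBounds}) fails, so the hypothesis of the theorem is violated, which is what we wanted.

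The routine ingredients are the gambler's-ruin estimate and the identification of $\P_z$ with $\bar{\P^n_z}$ up to the first exit of $V_n$, both already present in the paper. The step I expect to be the real content is the non-line case: the right move is to bring in the auxiliary vertex $z$ of Proposition~\ref{prop:TransientTImpliesStrongTransience} and split according to which of $z,x,y$ the reflected walk meets first after leaving $V_n$ — this is precisely what makes it unnecessary to control where the walk wanders once it has reached the boundary, which is where a more naive attempt gets stuck. The small lemma $\P_z[\tau_z^+=\tau_x=\tau_y=\infty]>0$ is what guarantees the three-way split carries positive mass.
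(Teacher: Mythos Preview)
Your argument is correct and shares the paper's overall architecture: prove the contrapositive, split into the line and non-line cases, and in the latter invoke Proposition~\ref{prop:TransientTImpliesStrongTransience} together with the one-step prefixing inequality. The executions differ in two places. For the line case you give a self-contained gambler's-ruin estimate, while the paper simply quotes Theorem~\ref{thm:MainResult} to obtain $\lim_n\bar{\P^n_x}[\tau_{V_n^*}<\tau_y<\tau_x^+]=0$ and then reads off $\P_x[\tau_x^+=\tau_y=\infty]=0$ directly from the hypothesis, contradicting transience. In the non-line case the paper keeps the hypothesis in force and uses only a two-way split after $\tau_{V_n^*}$ (into $\tau_x<\tau_y$ versus $\tau_y<\tau_x$): the hypothesis at $(x,y)$ kills the first branch via $\bar{\P^n_x}[\tau_{V_n^*}<\tau_x^+<\tau_y]\ge p(x,z)\,\bar{\P^n_z}[\tau_{V_n^*}<\tau_x<\tau_y]$, and the surviving branch then contradicts the hypothesis at $(y,x)$. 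Your upgrade to $\P_z[\tau_z^+=\tau_x=\tau_y=\infty]>0$ and the resulting three-way split are therefore not needed---the paper's two-way version already works without controlling $\tau_z^+$---but your route has the minor virtue of exhibiting a failing pair constructively rather than by contradiction inside the case analysis.
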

\begin{proof}
	By Proposition \ref{prop:CharacterizationConvergenceLowerBound}, we have
	\begin{equation}
	\label{eq:HittingTimeLimitLowerBound}
	\all x,y \in V: \lim_{n \to \infty} \P_x[\tau_{V_n^*} < \tau_x^+ < \tau_y] = 0.
	\end{equation}
	
	Suppose that $G$ is transient and not a subgraph of a line. Using the same arguments as in the proof of Theorem \ref{thm:MainResult}, we see that there exist distinct vertices $x,y,z \in V$ such that $(x,z,y) \in \Gamma_G(x,y)$ and
	\[
	\limsup_n \bar{\P_z^n}[\tau_{V_n^*} < \tau_x < \tau_y] + \limsup_n \bar{\P_z^n}[\tau_{V_n^*} < \tau_y < \tau_x] > 0.
	\]
	Since	
	\[
	\bP[\tau_{V_n^*} < \tau_x^+ < \tau_y] \geq p(x,z) \cdot \bar{\P^n_z}[\tau_{V_n^*} < \tau_x < \tau_y],
	\]
	it follows that 
	\[
	\limsup_n \bar{\P_z^n}[\tau_{V_n^*} < \tau_x < \tau_y] = 0
	\]
	which implies
	\[
	\limsup_{n \to \infty} \bar{\P^n_z}[\tau_{V_n^*} < \tau_y < \tau_x] > 0.
	\]
	However, we also have
	\[
	\bar{\P^n_y}[\tau_{V_n^*} < \tau_y^+ < \tau_x] \geq p(y,z) \cdot \bar{\P^n_z}[\tau_{V_n^*} < \tau_y < \tau_x]
	\]
	and it follows that 
	\[
	\limsup_n \bar{\P_y^n}[\tau_{V_n^*} < \tau_y^+ < \tau_x] > 0
	\]
	which is a contradiction to (\ref{eq:HittingTimeLimitLowerBound}).
	
	Hence, if $G$ is transient, then it must be a subgraph of a line. In this case,
	\[
		\lim_{n \to \infty} \bP[\tau_{V_n^*} < \tau_y < \tau_x^+] = 0
	\]
	follows for all $x,y \in V$ by Theorem \ref{thm:MainResult}. Together with (\ref{eq:HittingTimeLimitLowerBound}), this implies
	\[
	\P_x[\tau_x^+ = \tau_y = \infty] = 0
	\]
	for all $x,y \in V$. However, this is a contradiction to the transience of $G$.
	
\end{proof}
\begin{cor}
	\label{cor:LowerBoundCharacterization}
	Let $G$ be an infinite, connected graph. Then, 
	\begin{equation}
	\label{eq:LowerBound}
	R^F(x,y) = \frac{1}{c_x \cdot \P_x[\tau_y \leq \tau_x^+]}
	\end{equation}
	holds for all $x,y \in V$ if and only if $G$ is recurrent.
\end{cor}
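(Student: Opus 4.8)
The plan is to reduce the statement about $R^F$ to the probabilistic convergence condition already analyzed, exactly mirroring the proof of Corollary \ref{cor:MainResult}, and then invoke Theorem \ref{thm:ConvergenceLowerBoundImpliesRecurrence} for the harder (``only if'') direction together with the classical recurrent case for the (``if'') direction. Concretely, recall from Section \ref{section:FreeEffRes} that $R_{G_n}(x,y) = 1/((c_n)_x \P^n_x[\tau_y < \tau_x^+])$ and $(c_n)_x \to c_x$, so that the identity \eqref{eq:LowerBound} holds for a fixed pair $x,y$ if and only if $\lim_{n\to\infty} \P^n_x[\tau_y < \tau_x^+] = \P_x[\tau_y \le \tau_x^+]$; this is precisely \eqref{eq:ConvergenceOfProbabilitesLowerBound}. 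By Proposition \ref{prop:CharacterizationConvergenceLowerBound}, the latter is equivalent to $\lim_{n\to\infty} \bar{\P^n_x}[\tau_{V_n^*} < \tau_y < \tau_x^+] = \P_x[\tau_x^+ = \tau_y = \infty]$. Hence \eqref{eq:LowerBound} holds for \emph{all} $x,y \in V$ if and only if the hypothesis of Theorem \ref{thm:ConvergenceLowerBoundImpliesRecurrence} is satisfied.

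For the ``only if'' direction I would then simply apply Theorem \ref{thm:ConvergenceLowerBoundImpliesRecurrence}: if \eqref{eq:LowerBound} holds for all $x,y$, the displayed limit condition holds for all $x,y$, so $G$ is recurrent. For the ``if'' direction, suppose $G$ is recurrent. Then $\P_x[\tau_x^+ = \tau_y = \infty] = 0$ for all $x,y$ (recurrence forces $\P_x[\tau_x^+ = \infty] = 0$), so $\P_x[\tau_y \le \tau_x^+] = \P_x[\tau_y < \tau_x^+] + \P_x[\tau_y = \tau_x^+ < \infty] = \P_x[\tau_y < \tau_x^+]$ since $\tau_y = \tau_x^+$ is impossible for $x \neq y$ when both are finite; thus the upper and lower bounds in \eqref{eq:BarlowBounds} coincide and both equal $R^F(x,y)$ — this is the known unique-currents case. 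Alternatively, and more in the spirit of the paper, one checks \eqref{eq:HittingTimeBoundaryLimitNever} directly: for recurrent $G$ the right-hand side is $0$, and $\bar{\P^n_x}[\tau_{V_n^*} < \tau_y < \tau_x^+] \le \bar{\P^n_x}[\tau_{V_n^*} < \tau_x^+]$, whose limit is $\P_x[\tau_{V\setminus V_n} < \tau_x^+ \text{ eventually}] = \P_x[\tau_x^+ = \infty] = 0$; so the hypothesis of the equivalence holds and \eqref{eq:LowerBound} follows. The case $x = y$ is trivial since $R^F(x,x) = 0$ and $\P_x[\tau_x \le \tau_x^+] = 1$.

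The only genuine content here beyond bookkeeping is Theorem \ref{thm:ConvergenceLowerBoundImpliesRecurrence}, which is already proved, so the main (minor) obstacle is being careful that the equivalences from Section \ref{section:FreeEffRes} and Proposition \ref{prop:CharacterizationConvergenceLowerBound} are applied for every pair $x \neq y$ simultaneously, and handling the degenerate pair $x=y$ separately. I expect the write-up to be short: state the reduction, cite Proposition \ref{prop:CharacterizationConvergenceLowerBound} and Theorem \ref{thm:ConvergenceLowerBoundImpliesRecurrence} for ``only if'', and give the one-line recurrence computation for ``if''.
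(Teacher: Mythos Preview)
Your proposal is correct and follows essentially the same route as the paper: for the ``if'' direction you use recurrence to collapse the two bounds in \eqref{eq:BarlowBounds}, and for the ``only if'' direction you reduce \eqref{eq:LowerBound} to the convergence statement \eqref{eq:ConvergenceOfProbabilitesLowerBound} and then invoke Theorem~\ref{thm:ConvergenceLowerBoundImpliesRecurrence} (with Proposition~\ref{prop:CharacterizationConvergenceLowerBound} supplying the translation to its hypothesis). The extra remarks on the $x=y$ case and the alternative verification of \eqref{eq:HittingTimeBoundaryLimitNever} are not needed but do no harm.
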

\begin{proof}
	If $G$ is recurrent, we have $\P_x[\tau_x^+ = \tau_y = \infty] = 0$ for all $x,y \in V$. Hence, 
	\[
	\P_x[\tau_x^+ < \tau_y] = \P_x[\tau_x^+ \leq \tau_y]
	\]
	and (\ref{eq:BarlowBounds}) implies the claim.
	
	If (\ref{eq:LowerBound}) holds for all $x,y \in V$, then we have
	\[
		\lim_{n \to \infty} \P^n_x[\tau_y < \tau_x^+] = \P_x[\tau_y \leq \tau_x^+] 
	\]
	for all $x,y \in V$ and Theorem \ref{thm:ConvergenceLowerBoundImpliesRecurrence} implies the recurrence of $G$.
\end{proof}
This shows that the lower bound in (\ref{eq:BarlowBounds}) is actually a strict inequality for transient graphs.

\bibliographystyle{alpha}
\bibliography{refs_graphs}

\end{document}